\documentclass[11pt,oneside,leqno]{amsart}

\textwidth=5in
\textheight=7.5in
\hfuzz=3pt
\usepackage{amsxtra}
\usepackage{amsopn}
\usepackage{amsmath,amsthm,amssymb}
\usepackage{color}
\usepackage{amscd}
\usepackage{pifont}
\usepackage{amsfonts}
\usepackage{latexsym}
\usepackage{verbatim}
\usepackage{pb-diagram}

\theoremstyle{plain}
\newtheorem{theorem}{Theorem}[section]
\newtheorem*{theorem*}{Theorem}

\newtheorem{prop}[theorem]{Proposition}
\newtheorem{cor}[theorem]{Corollary}
\newtheorem{rem}[theorem]{Remark}
\newtheorem{ex}[theorem]{Example}
\newtheorem*{mt*}{Main Theorem}
\sloppy


\newcommand\R{{\mathbb R}}

\newcommand\pa[1]{\partial_{#1}}

\setlength{\textwidth}{15cm} \setlength{\evensidemargin}{1cm}
\setlength{\oddsidemargin}{1cm}

\begin{document}
\title{Totally Geodesic and Parallel Hypersurfaces of G\"odel-type spacetimes}
\author{Giovanni Calvaruso}
\address{Giovanni Calvaruso: Dipartimento di Matematica e Fisica \lq\lq E. De Giorgi\rq\rq \\
Universit\`a del Salento\\
Prov. Lecce-Arnesano \\
73100 Lecce\\ Italy.}
\email{giovanni.calvaruso@unisalento.it}
\author{Lorenzo Pellegrino}
\address{Lorenzo Pellegrino: Dipartimento di Matematica e Fisica \lq\lq E. De Giorgi\rq\rq \\
Universit\`a del Salento\\
Prov. Lecce-Arnesano \\
73100 Lecce\\ Italy.}
\email{lorenzo.pellegrino@unisalento.it}
\author{Joeri Van der Veken}
\address{Joeri Van der Veken: Department of Mathematics \\
University of Leuven\\
Celestijnenlaan 200B \\
3001 Leuven \\  Belgium.}
\email{joeri.vanderveken@kuleuven.be}

\subjclass[2020]{53B25, 53C50}
\keywords{G\"odel-type metrics, parallel hypersurfaces, totally geodesic hypersurfaces}
\thanks{J. Van der Veken is supported by the Research Foundation–Flanders (FWO) and the National Natural Science Foundation of China (NSFC) under collaboration project G0F2319N, by the KU Leuven Research Fund under project 3E210539 and by the Research Foundation–Flanders (FWO) and the Fonds de la Recherche Scientifique (FNRS) under EOS Projects G0H4518N and G0I2222N}

\begin{abstract}
We classify parallel and totally geodesic hypersurfaces of the relevant class of G\"odel-type spacetimes, with particular regard to the homogeneous examples.
 \end{abstract}

\maketitle

\section{Introduction}

A submanifold $M$ of a pseudo-Riemannian manifold 
is said to be \textit{parallel} if its second fundamental form $h$ (and hence, all the extrinsic invariants derived from it) is covariantly constant. Parallel submanifolds extend in a natural way the notion of \textit{totally geodesic} submanifolds, for which $h=0$ and so, the geodesics of a totally geodesic submanifold are also geodesics of the ambient space. Thus, the study of parallel and totally geodesic hypersurfaces of a given pseudo-Riemannian manifold is a natural problem, which enriches our knowledge and understanding of the geometry of the manifold itself.

Parallel hypersurfaces of a locally symmetric ambient space are locally symmetric, but this property does not exted to  more general ambient spaces. This fact makes it  particularly interesting to investigate parallel hypersurfaces of homogeneous spaces which are not locally symmetric. Moreover, in pseudo-Riemannian settings, hypersurfaces of different signatures can occur, so that their investigation is at the same time harder and richer than in the Riemannian case. 

Parallel surfaces have been intensively studied in three-dimensional Lorentzian ambient spaces. We may refer to 
\cite{CV1}-\cite{CV4} for several examples. Understandably, the study of parallel hypersurfaces becomes more difficult for  ambient spaces of higher dimension. On the other hand, also because of their relevance in Mathematical Physics, four-dimensional Lorentzian manifolds are natural candidates for this kind of study. Classifications of parallel hypersurfaces in some classes of four-dimensional Lorentzian and pseudo-Riemannian manifolds may be found in \cite{CV5},\cite{CSV},\cite{ChV},\cite{DV}. 
We may observe that in some cases, homogeneous pseudo-Riemannian four-manifolds  do not allow parallel and totally geodesic hypersurfaces (see for example \cite{CSV},\cite{DV}).

\smallskip
In this paper we classify parallel and totally geodesic hypersurfaces within the wide class of G\"odel-type spacetimes, with particular regard to the homogeneous examples. In 1949, G\"odel \cite{God} obtained a solution to Einstein field equations with cosmological constant for incoherent matter with rotation. In this interesting model,  closed timelike geodesics occur. 
G\"odel's example has been generalized to a family of rotating cosmological spacetimes, depending on two functions of one variable.

In the usual notation of Theoretical Physics, which considers Lorentzian metrics of signature $(+,-,-,-)$, {\em G\"odel-type spacetimes} are described by the Lorentzian metrics 
\begin{equation}\label{GTmet}
g = [dt + H(r) d\phi]^2 - dr^2 - D^2(r) d\phi ^2 - dz^2,
\end{equation}
where $t$ is the time variable and $(r,\phi,z)$ are cylindrical coordinates, so that $r\geq 0$, $\phi \in \mathbb R$ 
(undetermined for $r=0$) and $z \in \mathbb R$. 
As $\det (g)= -D^2(r)$, we have $D(r) \neq 0$ for the metric $g$ to be nondegenerate.

\subsection{G\"odel-type homogeneous spacetimes}
A G\"odel-type spacetime is homogeneous (i.e., it admits a group of isometries acting transitively on it) if and only if there exist some real constants $\alpha$ and $\omega$, such that 
\begin{equation}\label{GThom}
D''  = \alpha D,  \qquad  H'  = -2\omega D
\end{equation}
(see \cite{RG},\cite{RT}). 
We may refer to the following works and references therein for some of the several topics which have been investigated for G\"odel-type homogeneous spacetimes: causality properties \cite{CRT},\cite{RA}, curvature collineations \cite{MNPV}, matter collineations  \cite{CS}, energy and momentum \cite{Sh}, geodesics \cite{GGKS}, the lightcone \cite{Dau}, geodesic connectedness \cite{BCF}, characterizations \cite{PS},  Klein-Gordon equations \cite{Ja}, Ricci solitons \cite{Ca}.

According to the sign of the real constants $\alpha$ and $\omega$, G\"odel-type homogeneous spacetimes are classified into the {following} four non-isometric classes \cite{CS}:

\medskip
{\em Class I:} $\alpha=m^2 >0$, $\omega \neq 0$. Then, the solution to~\eqref{GThom} is given by
\begin{equation}\label{GTI}
H(r)=\frac{2\omega}{m^2}[1-\cosh(mr)], \qquad D(r)=\frac{1}{m}[\sinh(mr)].
\end{equation}

\medskip
{\em Class II:} $\alpha=0$, $\omega \neq 0$. In this case,
\begin{equation}\label{GTII}
H(r)=-\omega r^2 , \qquad D(r)=r .
\end{equation}

\medskip
{\em Class III:} $\alpha=-\mu^2 <0$, $\omega \neq 0$. Then, 
\begin{equation}\label{GTIII}
H(r)=\frac{2\omega}{\mu^2}[\cos(\mu r)-1], \qquad D(r)=\frac{1}{\mu}[\sin(\mu r )].
\end{equation}

\medskip
{\em Class IV:} $\alpha\neq 0$, $\omega =0$. After a suitable change of coordinates, $H(r)=0$ and  $D(r)$ is either as in~\eqref{GTI} or as in~\eqref{GTIII}, depending on whether $\alpha=m^2 >0$ or $\alpha=-\mu^2<0$,  respectively.


The standard G\"odel spacetime belongs to {\em Class~I} and is obtained for $m^2=2\omega^2$. 
Metrics belonging to {\em Class~II} and to {\em Class~I} with $0 < m^2 < 4 \omega^2$ have only one noncausal region. For 
$m^2 \geq 4 \omega ^2$ there exist no closed timelike curves. The limiting case $m^2=4\omega ^2$ is a completely causal homogeneous G\"odel-type spacetime. Homogeneous G\"odel-type metrics admit a five-dimensional group of isometries, except for \begin{itemize}
\item the limiting case $m^2=4\omega ^2$, which admits a seven-dimensional group of isometries \cite{RT}, and
\item  {\em Class~IV}, where the group of isometries is six-dimensional.
\end{itemize}
Indeed, the two cases listed above are locally symmetric (see also \cite{Ca}). As such, they are somewhat trivial for the actual study, as their parallel hypersurfaces are locally symmetric, too.

Metrics  in {\em Class~III} \ admit infinitely many causal and noncausal regions. Metrics in {\em Class~IV} are also known as {\em degenerate} G\"odel-type spacetimes, since their rotation is $\omega=0$. 
Finally, one excludes from the above classification  the trivial case $\alpha=\omega=0$, which corresponds to the Minkowski spacetime. Observe that parallel hypersurfaces in the Minkowski spacetime were classified in \cite{ChV}. {For this reason, throughout the paper we shall always exclude this case.}

The paper is organized in the following way. In Section~2 we report the needed preliminary information about parallel and totally geodesic hypersurfaces and the description of the Levi-Civita connection and curvature of G\"odel-type spacetimes. 
In Section~3 we investigate the class of their hypersurfaces admitting a Codazzi second fundamental form, which include parallel (and totally geodesic) hypersurfaces. Finally, in Section~4 we deal with the classification and explicit description  of parallel and totally geodesic hypersurfaces of G\"odel-type spacetimes. We shall also point out several examples of proper constant mean curvature and minimal parallel {hypersurfaces}.

\section{Preliminaries}
\setcounter{equation}{0}

\subsection{Parallel and totally geodesic hypersurfaces}

Let $F : M^n \to \bar M^{n+1}$ be an isometric immersion of pseudo-Riemannian manifolds. We shall denote both metrics by $g$. Let $\xi$ be a unit normal vector field on the hypersurface, with $g(\xi,\xi) = \varepsilon \in \{ -1,1\}$. Denote by $\nabla ^M$ and $\nabla$ the Levi-Civita connections of $M^n$ and $\bar M^{n+1}$ respectively. Let $X$ and $Y$ be
vector fields on $M^n$ (we will always identify vector fields
tangent to $M^n$ with their images under $dF$). 
The well known {\em formula of Gauss}
\begin{align} \label{fG}
\nabla_X Y = \nabla ^M _X Y + h(X,Y) \xi
\end{align}
defines the \emph{second fundamental form} $h$ of the immersion, which is a
symmetric $(0,2)$-tensor field on $M^n$.

$M$ is said to be \emph{totally geodesic} if $h=0$. This is equivalent to the geometric property that every geodesic of $M$ is also a geodesic of the ambient space $\bar M$. 

Next, consider the covariant derivative $\nabla^M h$ of the second fundamental form, given by
$$
(\nabla^M h)(X,Y,Z) = X(h(Y,Z)) - h(\nabla^M _XY,Z) - h(Y,\nabla ^M _XZ)
$$
for all vector fields $X,Y,Z$ tangent to $M$. The hypersurface is said to be \emph{parallel}, or to have parallel
second fundamental form, if 
\begin{align} \label{nablahis0}
\nabla^M h = 0.
\end{align}
Clearly, totally geodesic hypersurfaces are parallel.

It is well known that $M$ is locally symmetric when $\nabla^M R=0$. Thus, condition \eqref{nablahis0} can be seen as the extrinsic analogue of local symmetry. Indeed, just like the Riemann-Christoffel curvature tensor $R$ contains all
the information on the intrinsic geometry of a pseudo-Riemannian manifold, the second fundamental form $h$ contains all the extrinsic geometric information, concerning how $M^n$ is immersed in $\bar M^{n+1}$. 

Such a correspondence is even more evident when the ambient space $\bar M^{n+1}$ is symmetric. In fact, let  $M^n \to \bar M^{n+1}$ denote a complete, connected and embedded hypersurface of a simply connected symmetric space. Then, $M^n$ is parallel if and only if for every $p \in M^n$ there exists an isometry $\sigma_p$ of $\bar M^{n+1}$ such that $\sigma_p(p) = p$, $d\sigma_p |_{T_pM^n} =  -\mathrm{id}_{T_pM^n}$, $d\sigma_p |_{T_p^{\perp}M^n} =
\mathrm{id}_{T_p^{\perp}M^n}$ and $\sigma_p M^n = M^n$ (see \cite{N}). Hence, $M^n$ itself is symmetric and the geodesic
symmetries of $M^n$ extend to isometries of the ambient space. More in general, parallel hypersurfaces of locally symmetric ambient spaces are again locally symmetric.

We now state the equations of Gauss and Codazzi, which follow from \eqref{fG} by a
straightforward computation. Denote by $R^M$ and $R$ the Riemann-Christoffel curvature tensors of $M^n$ and $\bar M^{n+1}$ respectively. The equations of Gauss and Codazzi then respectively read
\begin{align}
\label{eG}
& g(R (X,Y)Z,W) = g(R^M (X,Y)Z,W) + \varepsilon \left( h(X,Z)h(Y,W) - h(X,W)h(Y,Z) \right),\\
\label{eC} & g(R (X,Y)Z,\xi) = \varepsilon \left(
(\nabla^M h)(X,Y,Z) - (\nabla^M h)(Y,X,Z) \right),
\end{align}
where $X$, $Y$, $Z$ and $W$ are tangent to $M^n$. Throughout this paper, we will always use the sign convention 
$R(X,Y)=[\nabla_X, \nabla_Y]-\nabla_{[X,Y]}$.

The hypersurface is said to have a {\em Codazzi second fundamental form} if {$\nabla^{M} h$} is symmetric in its three arguments. Clearly, by equation~\eqref{eC}, this is equivalent to requiring that $R(X,Y)\xi=0$ for all vector fields $X$ and $Y$ on {$M$}. It is evident that totally geodesic and parallel hypersurfaces have a Codazzi second fundamental form.

Finally, another necessary condition for totally geodesic and parallel hypersurfaces is semi-parallelism. We recall that 
$M$ is said to be {\em semi-parallel} if $R^M \cdot h = 0$, where, for all  vectors $X,Y,Z,W$ tangent to $M$,
$$
(R^M \cdot h) (X,Y,Z,W) = -h(R^M (X,Y)Z, W)-h(Z,R^M (X,Y)W).
$$

\subsection{Curvature and connection of G\"odel-type metrics}
Let $g$ denote an arbitrary G\"odel-type metric $g$, as described by  \eqref{GTmet} with respect to the coordinate system 
$(x_1,x_2,x_3,x_4) = (t , r, \phi, z)$. We shall denote by $\{ \pa {_i}=\frac{\partial}{\partial x_i}\}$ the basis of coordinate vector fields. Then, it is easy to check that 
\begin{equation}\label{frame}
E_1= \pa {1}, \quad E_2= \pa {2}, \quad E_3=- \frac{H}{D} \pa {1}+ \frac{1}{D} \pa {3}, \quad E_4= \pa {4}
\end{equation}
form a pseudo-orthonormal basis for $g$, namely $-g(E_1, E_1)=g(E_2, E_2)=g(E_3, E_3)=g(E_4, E_4)=-1$ and $g(E_i,E_j)=0$ if $i\neq j$.
In addition, we can observe that the only possibly  non-vanishing Lie bracket of the above vector fields is given by
\begin{equation}\label{bra}
[E_2,E_3]=-\frac{H'}{D}E_1-\frac{D'}{D}E_3.
\end{equation}
We can now describe the Levi-Civita connection $\nabla$ of $g$ with respect to the basis $\{E_i\}$. By means of the 
{\em Koszul formula} and \eqref{bra} we get
\begin{equation}\label{nabla}
\begin{array}{llll}
\nabla_{E_1}E_1=0, &\;
\nabla_{E_2}E_1=-\frac{H'}{2D}E_3, &\;
\nabla_{E_3}E_1=\frac{H'}{2D}E_2,&\;
\nabla_{E_4}E_1=0, 
 \\[6pt]
\nabla_{E_1}E_2=-\frac{H'}{2D}E_3, &\;
\nabla_{E_2}E_2=0, &\;
\nabla_{E_3}E_2=\frac{H'}{2D}E_1+\frac{D'}{D}E_3, &\;
\nabla_{E_4}E_2=0, 
 \\[6pt]
  \nabla_{E_1}E_3=\frac{H'}{2D}E_2, &\;
\nabla_{E_2}E_3=-\frac{H'}{2D}E_1, 
&\;
\nabla_{E_3}E_3=-\frac{D'}{D}E_2, 
&\;
\nabla_{E_4}E_3=0,
 \\[6pt]
  \nabla_{E_1}E_4=0, &\;
\nabla_{E_2}E_4=0, 
&\;
\nabla_{E_3}E_4=0, 
&\;
\nabla_{E_4}E_4=0.
\end{array}
\end{equation}
We now consider the curvature tensor of $g$. Starting from \eqref{nabla} we find by a direct calculation that with respect to 
$\{E_i \}$, the curvature tensor  is completely determined by  the following possibly non-vanishing components
\begin{equation}\label{R}
\begin{array}{ll}
R(E_1,E_2)E_1=\left( \frac{H'}{2D}\right)^2E_2, &\; 
R(E_1,E_2)E_2=\left( \frac{H'}{2D}\right)^2E_1- \left( \frac{H'}{2D}\right)' E_3,
\\[7pt]
R(E_1,E_2)E_3=\left( \frac{H'}{2D}\right)' E_2, &\; 
R(E_1,E_3)E_1=\left( \frac{H'}{2D}\right)^2 E_3,
\\[7pt]
R(E_1,E_3)E_3=\left( \frac{H'}{2D}\right)^2 E_1, &\;
R(E_2,E_3)E_1=-\left( \frac{H'}{2D}\right)' E_2,
\\[7pt]
R(E_2,E_3)E_2=-\left( \frac{H'}{2D}\right)' E_1+\frac{3H'^2-4DD''}{4D^2}E_3, &\;
R(E_2,E_3)E_3=-\frac{3H'^2-4DD''}{4D^2}E_2,
\end{array}
\end{equation}

\section{Hypersurfaces with a Codazzi second fundamental form}
\setcounter{equation}{0}

Let $F: M \rightarrow (\bar M,g)$ denote the immersion of a hypersurface into a G\"odel-type spacetime and $\xi$ the unit  normal vector field to the hypersurface. The following result gives some necessary algebraic conditions on the components of 
$\xi$ with respect to the frame $\{E_1, E_2, E_3, E_4\}$ on $\bar M$, in order for $M$ to have a Codazzi second fundamental form.

\begin{theorem}\label{t1}
Let $F: M \rightarrow \bar M$ be a hypersurface with a Codazzi second fundamental form and $\xi$ the unit normal vector field,  with $g(\xi, \xi)=\varepsilon \in \{-1, 1\}$. Consider the pseudo-orthonormal frame $\{E_i\}$ on $\bar M$ defined in 
\eqref{frame} and set 
$$
f_1=\left(\frac{H'}{2D}\right)^2,  \qquad f_2=-\left(\frac{H'}{2D}\right)', \qquad f_3=3f_1- \frac{D''}{D}.
$$
Then, every point of $M$ has a neighborhood $U\subseteq M$ on which one of the following conditions holds:
\begin{itemize}
\vspace{3pt}\item[(I)] $\xi=E_4$;
\vspace{3pt}\item[(II)] $\xi=E_2$;
\vspace{3pt}\item[(III)] $\xi=\cos \theta E_2+\sin \theta E_3$ for some function $\theta:U\rightarrow \R$ and $f_2=0$;
\vspace{3pt}\item[(IV)] $\xi=aE_1+cE_3$ for some functions $a,c:U\rightarrow \R$ and $ac(f_1+f_3)-(a^2+c^2)f_2=0$;
\vspace{3pt}\item[(V)] $\xi=aE_1+dE_4$ for some functions $a,d:U\rightarrow \R$ and $f_1=0$;
\vspace{3pt}\item[(VI)] $\xi=aE_1+bE_2+cE_3$ for some functions $a,b,c:U \rightarrow \R$ and $f_2=f_1+f_3=0$.
\end{itemize}
\end{theorem}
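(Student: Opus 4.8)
The plan is to exploit the characterization recalled just before the statement: a hypersurface has a Codazzi second fundamental form if and only if $R(X,Y)\xi=0$ for all $X,Y$ tangent to $M$. Since $\{E_i\}$ is a pseudo-orthonormal frame and $\xi$ is a unit vector, write $\xi=aE_1+bE_2+cE_3+dE_4$ with $-a^2+b^2+c^2+d^2=\varepsilon$, and observe that the tangent space $T_pM$ is exactly the $g$-orthogonal complement of $\xi$. The Codazzi condition $R(X,Y)\xi=0$ for all tangent $X,Y$ is then equivalent to $R(Y,Z)\xi=0$ for every pair $Y,Z$ of vectors orthogonal to $\xi$; because the curvature operator is antisymmetric in its first two slots and $R(\cdot,\xi)$ can be split off, one can check that this is in fact equivalent to the apparently stronger condition $R(V,W)\xi=0$ for \emph{all} $V,W\in T_p\bar M$, i.e.\ $\xi$ lies in the kernel of every curvature operator $R(V,W)$. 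I would prove this reduction first, as it removes the dependence on the (unknown) tangent directions and turns the problem into pure linear algebra on the four functions $a,b,c,d$.

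Next I would substitute $\xi=aE_1+bE_2+cE_3+dE_4$ into the components of $R$ listed in~\eqref{R}, rewritten in terms of $f_1=(H'/2D)^2$, $f_2=-(H'/2D)'$ and $f_3=3f_1-D''/D$. The key structural facts visible from~\eqref{R} are: $E_4$ never appears in any curvature term, so the coefficient $d$ is essentially free; and the whole curvature tensor is built from the $2$-planes spanned by $\{E_1,E_2\}$, $\{E_1,E_3\}$, $\{E_2,E_3\}$ with coefficients that are $\R$-linear combinations of $f_1,f_2,f_3$. Demanding $R(E_i,E_j)\xi=0$ for the relevant pairs $(i,j)$ produces a system of scalar equations, each of the form (coefficient depending on $a,b,c$ and on $f_1,f_2,f_3$) $=0$. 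I would organize the analysis into cases according to which of $a,b,c$ vanish identically on a neighborhood. Away from the zero sets one can divide, and the equations force relations such as $f_2=0$, or $f_1=0$, or $f_2=f_1+f_3=0$, or the single relation $ac(f_1+f_3)-(a^2+c^2)f_2=0$ in case~(IV); where all of $a,b,c$ vanish one is in case~(I), $\xi=E_4$ (up to sign, which is irrelevant since $-\xi$ defines the same hypersurface). The six listed cases should emerge as precisely the combinatorial possibilities that survive, after using connectedness of a small enough neighborhood $U$ to ensure the vanishing/non-vanishing pattern is constant on $U$.

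The main obstacle I anticipate is the bookkeeping in the case $b\neq0$ combined with $a$ or $c$ nonzero: here the curvature components coming from $R(E_1,E_2)$, $R(E_1,E_3)$ and $R(E_2,E_3)$ all contribute, and one must check that the only way for all of them to cancel simultaneously — without forcing $b=0$ — is to have $f_2=f_1+f_3=0$ (case~(VI)), while if $b=0$ and both $a,c\neq0$ one lands in case~(IV) with its quadratic constraint. A related subtlety is handling the mixed terms involving $d$: since $E_4$ is curvature-flat, $d$ imposes no condition, which is why cases~(III), (IV), (VI) do not mention $d$ while cases~(I), (II), (V) correspond to $\xi$ being pinned down (possibly together with an $E_4$-component, as in~(V)). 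I would also need to be careful to phrase everything locally, passing to a neighborhood on which the relevant coefficient functions have constant zero-pattern, so that the stated dichotomy ``one of (I)--(VI) holds on $U$'' is legitimate; this is a routine but necessary use of continuity.
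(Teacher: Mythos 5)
Your overall strategy (use the equivalence between the Codazzi condition and $R(X,Y)\xi=0$, expand $\xi$ in the frame $\{E_i\}$, and run a case analysis on the vanishing pattern of the coefficients) is the same as the paper's, but the reduction you propose to ``prove first'' is false, and the classification you would obtain from it is wrong. The Codazzi condition, via \eqref{eC}, controls the curvature only on the $2$-planes contained in $T_pM$: it says $g(R(X,Y)Z,\xi)=0$ for $X,Y,Z$ tangent, i.e.\ the curvature operators of the three tangential $2$-planes annihilate $\xi$. It says nothing about the operators $R(X,\xi)$ of the three mixed $2$-planes $X\wedge\xi$; by the pair symmetry $g(R(X,\xi)\xi,Y)=g(R(\xi,Y)X,\xi)$ these components are genuinely independent of the Codazzi data, and no antisymmetry argument recovers them. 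So ``$R(X,Y)\xi=0$ for all tangent $X,Y$'' is strictly weaker than ``$\xi\in\ker R(V,W)$ for all $V,W\in T_p\bar M$''.

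This is not a repairable technicality: the stronger condition changes the answer. Take case (IV), $\xi=aE_1+cE_3$ with $a,c\neq0$. Here $E_1$ is not tangent, and from \eqref{R} one gets $R(E_1,E_2)\xi=(af_1-cf_2)E_2$, so your condition would impose $af_1=cf_2$ in addition to, and independently of, the correct constraint $ac(f_1+f_3)-(a^2+c^2)f_2=0$, which arises from the genuinely tangential pair $\bigl(aE_2,\,cE_1+aE_3\bigr)$. Similarly, in case (VI) with, say, $b\neq 0$, one computes $R(E_1,E_2)\xi=f_1(bE_1+aE_2)+f_2(bE_3-cE_2)$, which under $f_2=0$ forces $f_1=0$ and hence $f_3=0$, i.e.\ the excluded Minkowski case; so case (VI) would essentially disappear. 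Your observation that ``$d$ is essentially free because $E_4$ is curvature-flat'' is a symptom of the same error: $d$ enters the correct analysis through which combinations are tangent (the paper uses $X_3=dE_1+aE_4$, $X_5=dE_2-bE_4$, $X_6=dE_3-cE_4$), and for instance the tangential condition $R(X_2,X_3)\xi=acdf_1E_1-a^2df_1E_3=0$ depends on $d$ and is precisely what produces $f_1=0$ in case (V). The paper's proof instead exhibits an explicit spanning set $X_1,\dots,X_6$ of tangent vectors, computes the handful of expressions $R(X_i,X_j)\xi$ that are actually needed, and runs the case split on $a=0$ versus $a\neq0$; you would need to do the same, keeping only the tangential curvature conditions.
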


\begin{rem}
Note that in the statement of the Theorem {\em\ref{t1}}, $f_j$ means $f_j\circ F_{|_U}$. In order to simplify the presentation, we use this notation from now on in the rest of  he paper. The same notation is used for functions $H$ and $D$.
\end{rem}

\begin{proof}
We first observe that functions $f_1, f_2, f_3$ are not independent from each other. In fact, if $f_1=0$ then $f_2=0$. On the other hand, if $f_2=0$, then $f_1$  is a constant. Moreover, if $f_1=f_2=f_3=0$, then we get $\alpha=\omega=0$ (see 
\eqref{GThom}), that is, the trivial case of a Minkowski spacetime, which we will exclude.

Consider now $\xi= aE_1+bE_2+cE_3+dE_4$, for some functions $a, b, c, d: U\rightarrow \R$ satisfying 
$a^2-b^2-c^2-d^2 =g(\xi,\xi)=\varepsilon =\pm 1 \neq 0$. Then, the following vector fields are tangent to the hypersurface:
$$
\begin{array}{ll}
X_1=bE_1+aE_2, &\quad X_4=cE_2-bE_3,\\[6pt]
X_2=cE_1+aE_3, &\quad X_5=dE_2-bE_4,\\[6pt]
X_3=dE_1+aE_4, &\quad X_6=dE_3-cE_4.
\end{array}
$$
If $h$ is Codazzi, equation \eqref{eC} yields that $R(X_i,X_j)\xi=0$ for every $i,j\in\{1,\ldots,6\}$. In particular,
\begin{align}
\label{123}
0&=R(X_1,X_2)\xi=a^2bf_2E_1-a(ac(f_1+f_3)-(a^2+c^2)f_2)E_2+ab(af_1-cf_2+af_3)E_3,
\\
\label{10}
0&=R(X_1,X_5)\xi=b^2df_1E_1+bd(af_1-cf_2)E_2+b^2df_2E_3,
\\
\label{16}
0&=R(X_2,X_3)\xi=acdf_1E_1-a^2df_1E_3,
\\
\label{19}
0&=R(X_2,X_4)\xi=-abcf_2E_1+c(ac(f_1+f_3)-(a^2+c^2)f_2)E_2-bc(af_1-cf_2+af_3)E_3,
\\
\label{35}
0&=R(X_4,X_5)\xi=b^2df_2E_1+bd(af_2-cf_3)E_2+b^2df_3E_3
\end{align}
We will treat separately two cases, depending on whether $a=0$ or $a\neq0$.

\medskip
\textbf{Case 1: {${a=0}$}.} In this case, equation \eqref{19} implies that $c^3 f_2=0$. Hence, we have the following two subcases.

\textit{Case 1.1: $a=c=0$.} By equation \eqref{10} we then have $b^2df_1=0$. If $b=0$ we get case (I) in the statement; for $d=0$ we recover the case (II). Finally, if $f_1=0$, then $f_2=0$ and from equation \eqref{35} we also find $f_3=0$, so that we get the case we excluded.

\textit{Case 1.2: $a=f_2=0$.} It follows from equation \eqref{10} that $b^2df_1=0$. If $d=0$, then $g(\xi, \xi)=-b^2-c^2=-1$ and we obtain the case (III) in the statement. If either $f_1=0$ or $b=0$ we recover the previous cases.

\textbf{Case 2: ${a\neq0}$}. In this case, it follows from equation \eqref{123} that $bf_2=0$. So, we distinguish two subcases.

\textit{Case 2.1: $b=0$.} Equation \eqref{16}, as $a \neq 0$, yields that $df_1=0$. If $d=0$, taking into account equation \eqref{19} we find case (IV) in the statement. If $f_1=0$, then from equation \eqref{123} we get $a^2cf_3=0$ which, excluding the case of the Minkowski spacetime,  yields case (V) in the statement.

\textit{Case 2.2: $f_2=0$.} In this case, it follows from  equation \eqref{10} that $b^2df_1=0$. If $d=0$, from equation  
\eqref{123} we obtain the case (VI) in the statement. If either $f_1=0$ or $b=0$ we recover some of the cases we already obtained.
\end{proof}

In the following results we provide an explicit description for the immersion $F: M \to \bar M$ of hypersurfaces corresponding to  types (I),(II),(III) and (V) listed in Theorem~\ref{t1}.

\begin{theorem}\label{FtcfI}
Let $F: M \rightarrow \bar M$ denote a hypersurface of type {\em (I)} in
Theorem~{\em\ref{t1}}. Then, 
the immersion can be described explicitly in local coordinates as
$$
F(u_1, u_2, u_3) = (u_1, u_2, u_3, 0).
$$
In particular, these timelike hypersurfaces are totally geodesic.
\end{theorem}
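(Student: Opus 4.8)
The plan is to start from the characterization in Theorem~\ref{t1}, where type (I) corresponds to the normal $\xi = E_4 = \partial_4 = \partial/\partial z$. Since $E_4$ has $g(E_4,E_4) = -1$, this is a timelike normal, so the hypersurface $M$ is spacelike... wait, no: a normal with $g(\xi,\xi) = -1$ means $\varepsilon = -1$; by the signature $(+,-,-,-)$ convention the induced metric on $M$ is then $(+,-,-)$, i.e.\ Lorentzian, so these are timelike hypersurfaces, as the statement asserts. Concretely, I would argue that $\xi = E_4$ being proportional (indeed equal) to the gradient of the coordinate function $z$ means the distribution $\xi^\perp$ is spanned by $E_1, E_2, E_3 = \partial_1, \partial_2, -\tfrac{H}{D}\partial_1 + \tfrac1D\partial_3$, which is exactly the tangent distribution to the level sets $\{z = \text{const}\}$. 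After the ambient isometry $z \mapsto z - c$ we may take the level set $\{z = 0\}$, giving the parametrization $F(u_1,u_2,u_3) = (u_1,u_2,u_3,0)$ with $u_1 = t$, $u_2 = r$, $u_3 = \phi$.

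The core computation is to show $h = 0$. First I would note $\nabla^M_X Y$ and $h(X,Y)$ are read off from the Gauss formula \eqref{fG}: for $X,Y$ tangent to $M$, $h(X,Y) = \varepsilon\, g(\nabla_X Y, \xi) = -g(\nabla_X Y, E_4)$. Taking $X,Y$ among the basis $\{E_1, E_2, E_3\}$ of $TM$, I would simply inspect the connection table \eqref{nabla}: every entry $\nabla_{E_i}E_j$ with $i,j \in \{1,2,3\}$ lies in $\mathrm{span}\{E_1,E_2,E_3\}$ — indeed none of $\nabla_{E_1}E_1, \nabla_{E_1}E_2, \nabla_{E_1}E_3, \nabla_{E_2}E_2, \nabla_{E_2}E_3, \nabla_{E_3}E_3$ has an $E_4$-component. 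Hence $g(\nabla_{E_i}E_j, E_4) = 0$ for all $i,j \in \{1,2,3\}$, so $h \equiv 0$ and $M$ is totally geodesic. Equivalently one observes from \eqref{nabla} that $E_4 = \partial_4$ is a parallel vector field on $\bar M$ ($\nabla_X E_4 = 0$ for all $X$), which immediately forces the Weingarten map, and thus $h$, to vanish on any hypersurface orthogonal to it.

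I would then tie up the description: the induced metric is obtained by pulling back \eqref{GTmet}, giving $F^*g = [du_1 + H(u_2)\,du_3]^2 - du_2^2 - D^2(u_2)\,du_3^2$, a three-dimensional Lorentzian metric (in fact a Lorentzian G\"odel-type surface metric one dimension down), consistent with "timelike." There is essentially no obstacle here — the only point requiring a sentence of care is the claim that these are \emph{all} the type (I) hypersurfaces, which follows because $\xi = E_4$ pointwise forces the hypersurface to be an integral leaf of the (integrable, since $[E_i,E_j] \in \mathrm{span}\{E_1,E_2,E_3\}$ for $i,j \le 3$ by \eqref{bra}) distribution $\mathrm{span}\{E_1,E_2,E_3\}$, and these leaves are exactly $\{z = \text{const}\}$; the ambient isometry $z \mapsto z - \text{const}$ normalizes the constant to $0$. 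So the main "difficulty" is purely bookkeeping: verifying from \eqref{nabla} that no $E_4$-component appears, which is immediate by inspection.
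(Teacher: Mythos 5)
Your proposal is correct and follows essentially the same route as the paper: identify the tangent distribution as $\mathrm{span}\{E_1,E_2,E_3\}=\mathrm{span}\{\partial_1,\partial_2,\partial_3\}$, normalize the $z$-coordinate by a translation to get $F(u_1,u_2,u_3)=(u_1,u_2,u_3,0)$, and read off $h=0$ from the connection table \eqref{nabla} since no $\nabla_{E_i}E_j$ with $i,j\in\{1,2,3\}$ has an $E_4$-component. The extra observations (integrability of the distribution, $E_4$ being parallel, the explicit pullback metric) are harmless additions, and your corrected signature reasoning for ``timelike'' is right.
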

\begin{proof}
As  $\xi=E_4$, taking into account \eqref{frame}, we get that the tangent space to $M$ at every point is given by 
span$\{E_1,E_2,E_3\}=$span$\{\pa 1, \pa 2, \pa 3 \}$. With respect to coordinates $(u_1,u_2,u_3)=(x_1,x_2,x_3)$ on $M$, after applying a translation in the $x_4$-direction, we obtain the required parametrization for $F$.

Finally, it  follows from \eqref{nabla} that {the component of $\nabla _{E_i} E_j$ along $\xi={E_4}$ vanishes for all $i,j=1,2,3$}. In particular, by the formula of Gauss \eqref{fG} this implies that $h=0$. Thus, $M$ is totally geodesic.
\end{proof}

\begin{theorem}\label{FtcfII}
Let $F: M \rightarrow \bar M$ denote a hypersurface of type {\em (II)} in
Theorem~{\em\ref{t1}}. Then, 
the immersion can be described explicitly in local coordinates as
{$$
F(u_1, u_2, u_3) = \left( u_1-\dfrac{H}{D}u_2, c, \dfrac{1}{D}u_2, u_3 \right),
$$ where $c$ is a real constant}. In particular, these timelike hypersurfaces are parallel and flat.
\end{theorem}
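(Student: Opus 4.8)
\textbf{Plan for the proof of Theorem~\ref{FtcfII}.}
The starting point is that $\xi = E_2 = \pa 2$, so the tangent space to $M$ at each point is $\mathrm{span}\{E_1, E_3, E_4\} = \mathrm{span}\{\pa 1, -\tfrac{H}{D}\pa 1 + \tfrac{1}{D}\pa 3, \pa 4\}$. Since this distribution does not involve $\pa 2$ at all, it is automatically integrable and the integral manifold through a given point is a level set $\{r = c\}$ for a constant $c$. To get the explicit parametrization, I would simply check that the map $F(u_1,u_2,u_3) = \left(u_1 - \tfrac{H(c)}{D(c)}u_2,\, c,\, \tfrac{1}{D(c)}u_2,\, u_3\right)$ has partial derivatives $F_{u_1} = \pa 1 = E_1$, $F_{u_2} = -\tfrac{H}{D}\pa 1 + \tfrac{1}{D}\pa 3 = E_3$ and $F_{u_3} = \pa 4 = E_4$ (all evaluated at $r = c$), hence it is an immersion whose image is the hypersurface of type (II) with unit normal $E_2$; the coordinate $c$ records which leaf we are on, and the particular affine combination in the first and third slots is forced by the requirement that $F_{u_2}$ equal $E_3$ rather than an arbitrary multiple of $\pa 3$.

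For the geometric claims, the signature statement is immediate: the induced metric on $M$ in the frame $\{E_1, E_3, E_4\}$ is $\mathrm{diag}(-1, 1, 1)$ (from the pseudo-orthonormality relations in~\eqref{frame}), so $M$ is Lorentzian, i.e.\ timelike, and $\varepsilon = g(\xi,\xi) = g(E_2,E_2) = 1$. For parallelism, I would compute the second fundamental form from the formula of Gauss~\eqref{fG}: $h(E_i, E_j)$ is the $E_2$-component of $\nabla_{E_i}E_j$ for $i,j \in \{1,3,4\}$. Reading off~\eqref{nabla}, the only nonzero such component is $\nabla_{E_3}E_3 = -\tfrac{D'}{D}E_2$, so in the orthonormal frame $\{E_1, E_3, E_4\}$ the second fundamental form is the constant-looking tensor with single entry $h(E_3, E_3) = -\tfrac{D'}{D}$. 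One must then verify $\nabla^M h = 0$: since $h$ is supported entirely on the $E_3$ direction, $(\nabla^M h)(X, E_3, E_3) = X(h(E_3,E_3)) - 2h(\nabla^M_X E_3, E_3)$, and both terms need to be examined. Here $h(E_3,E_3) = -D'/D$ is a function of $r$ alone, which is constant ($r = c$) on $M$, so the first term vanishes; for the second term I would compute the tangential connection $\nabla^M$ on $M$ — i.e.\ project the restrictions of~\eqref{nabla} onto $\mathrm{span}\{E_1,E_3,E_4\}$ — and check that $\nabla^M_X E_3$ has no $E_3$-component for any tangent $X$ (from~\eqref{nabla} the $E_3$-components of $\nabla_{E_i}E_3$ for $i = 1,3,4$ are all zero, and the projection only removes $E_2$-parts), so $h(\nabla^M_X E_3, E_3) = 0$ as well. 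Hence $\nabla^M h = 0$ and $M$ is parallel.

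Finally, for flatness I would invoke the Gauss equation~\eqref{eG}: $g(R^M(X,Y)Z,W) = g(R(X,Y)Z,W) - \varepsilon(h(X,Z)h(Y,W) - h(X,W)h(Y,Z))$ for $X,Y,Z,W \in \mathrm{span}\{E_1,E_3,E_4\}$. The ambient curvature components in~\eqref{R} that involve only indices $1,3$ are $R(E_1,E_3)E_1 = f_1 E_3$ and $R(E_1,E_3)E_3 = f_1 E_1$ (with $f_1 = (H'/2D)^2$), while $E_4$ enters no curvature term at all. So the only potentially nonzero ambient curvature quantity tangent to $M$ is $g(R(E_1,E_3)E_3, E_1) = -f_1$ (the sign coming from $g(E_1,E_1) = -1$), and the Gauss correction term for this same quadruple is $-\varepsilon(h(E_1,E_3)h(E_3,E_1) - h(E_1,E_1)h(E_3,E_3)) = 0$ since $h$ vanishes except on $(E_3,E_3)$. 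This gives $g(R^M(E_1,E_3)E_3,E_1) = -f_1$, which is \emph{not} obviously zero — so the flatness claim requires the extra input that on a type-(II) hypersurface one must also have $f_1 = 0$, equivalently $H' = 0$, equivalently (by~\eqref{GThom}) $\omega D = 0$, i.e.\ $\omega = 0$; alternatively, one substitutes the explicit class-by-class forms~\eqref{GTI}--\eqref{GTIII} and checks $r = c$ forces a contradiction unless the curvature correction cancels. \textbf{This is the step I expect to be the main obstacle}: reconciling the bare Gauss-equation output with the asserted flatness, which will hinge on whether type (II) is only realized in the degenerate (Class~IV, $\omega = 0$) situation or whether I have mislabeled a curvature component — I would resolve it by carefully re-deriving which of the six conditions in Theorem~\ref{t1} actually force $f_1 = 0$ and tracking that constraint through to $R^M$.
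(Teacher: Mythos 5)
Your setup (integrability of $\mathrm{span}\{E_1,E_3,E_4\}$, the level sets $r=c$, and the verification of the parametrization by matching partial derivatives with $E_1,E_3,E_4$) matches the paper's derivation, which instead integrates the system $\partial_{u_i}F = E_i$ directly. However, there is a genuine error at the core of your computation of the second fundamental form: reading \eqref{nabla}, the normal ($E_2$-)components of $\nabla_{E_i}E_j$ for $i,j\in\{1,3,4\}$ are \emph{not} exhausted by $\nabla_{E_3}E_3=-\tfrac{D'}{D}E_2$; you also have $\nabla_{E_1}E_3=\nabla_{E_3}E_1=\tfrac{H'}{2D}E_2$, so $h(E_1,E_3)=h(E_3,E_1)=\tfrac{H'}{2D}\neq 0$ in general. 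This dropped component is exactly what resolves the ``main obstacle'' you flag at the end: in the Gauss equation the term $\varepsilon\,h(E_1,E_3)h(E_3,E_1)=\varepsilon\left(\tfrac{H'}{2D}\right)^2$ (note also that $\varepsilon=g(E_2,E_2)=-1$ in the paper's $(+,-,-,-)$ convention, not $+1$) cancels the ambient curvature contribution $f_1$, so $R^M=0$ unconditionally and no constraint $f_1=0$ or $\omega=0$ is needed. The paper avoids the Gauss equation altogether: since every $\nabla_{E_i}E_j$ with $i,j\in\{1,3,4\}$ is a multiple of $\xi=E_2$ and all brackets $[E_i,E_j]$ among these fields vanish by \eqref{bra}, one gets $\nabla^M_{E_i}E_j=0$ identically, whence $R^M=0$ immediately.

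The same omission makes your parallelism check incomplete: you only examine $(\nabla^M h)(X,E_3,E_3)$, but with the correct $h$ one must also verify $(\nabla^M h)(X,E_1,E_3)=0$. Both follow at once from the paper's observation that $\nabla^M\equiv 0$ on the frame and that the entries $\tfrac{H'}{2D}$ and $-\tfrac{D'}{D}$ of $h$ depend only on $x_2=r$, which is constant (equal to $c$) along $M$. So the conclusions of the theorem are all correct and your strategy is salvageable, but as written the proposal mislabels the second fundamental form, reaches a false tentative conclusion (that flatness forces the degenerate case $\omega=0$), and would need the missing component of $h$ restored before the flatness and parallelism arguments close.
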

\begin{proof}
Since $\xi=E_2$, vector fields $E_1,E_3,E_4$ span the tangent space to $M$ at every point.

A direct calculation, using {\eqref{frame}} and \eqref{nabla}, gives
\begin{equation}\label{nablaII}
\begin{array}{lll}
\nabla_{E_1}E_1=0, &\quad
\nabla_{E_3}E_1=\frac{H'}{2D}\xi,& \quad
\nabla_{E_4}E_1=0, 
 \\[6pt]
  \nabla_{E_1}E_3=\frac{H'}{2D}\xi, &\quad
\nabla_{E_3}E_3=-\frac{D'}{D}\xi, 
&\quad
\nabla_{E_4}E_3=0,
 \\[6pt]
  \nabla_{E_1}E_4=0, &\quad
\nabla_{E_3}E_4=0, 
&\quad
\nabla_{E_4}E_4=0.
\end{array}
\end{equation}
Since vector fields in \eqref{nablaII} are normal to $M$, by the Gauss formula \eqref{fG} we get
\begin{equation}\label{nablaMII}
\nabla^M_{E_i}E_j=0, \quad i,j \in\{1, 3, 4\}
\end{equation}
and so, $\nabla^M =0$. In particular, $M$ is flat and the vector
fields $E_1  = \pa {u_1}$, $E_3  = \pa {u_2}$ and $E_4  = \pa {u_3}$ may be taken as coordinate vector fields on $M$.

Denote now by $F: M \rightarrow \bar M, \; (u_1, u_2, u_3)\mapsto(F_1(u_1, u_2, u_3), \ldots, F_4(u_1, u_2, u_3))$ the immersion of the hypersurface in the local coordinates introduced above. By \eqref{frameIII} and \eqref{frame}, we obtain
\begin{equation} \label{eqpaII}
\begin{array}{ll}
(\pa {u_1} F_1, \pa {u_1} F_2, \pa {u_1} F_3, \pa {u_1} F_4)=& (1,0,0,0),\\[6pt]
(\pa {u_2} F_1, \pa {u_2} F_2, \pa {u_2} F_3, \pa {u_2} F_4)=& {(-\frac{H}{D},0,\frac{1}{D},0)},\\[6pt]
(\pa {u_3} F_1, \pa {u_3} F_2, \pa {u_3} F_3, \pa {u_3} F_4)=& (0,0,0,1).
\end{array}
\end{equation}
Integrating \eqref{eqpaII} we find
{$$
 F_1= u_1-\frac{H}{D}u_2+c_1,\quad F_2= c_2,\quad F_3= \frac{1}{D}u_2+c_3,\quad F_4= u_3+ c_4
$$}
for some real constants $c_1$, $c_2$, $c_3$ and $c_4$. After a reparametrization, we obtain the immersion given in the statement.

Again from \eqref{nablaII} and the Gauss formula, we get that the second fundamental form is determined by
\begin{equation}\label{hII}
\begin{array}{lll}
h(E_1,E_1)=0, &\qquad h(E_1,E_3)=\frac{H'}{2D}, &\qquad h(E_3,E_4)=0,
\\[6pt]
h(E_1,E_4)=0, &\qquad h(E_3,E_3)=- \frac{D'}{D}, &\qquad h(E_4,E_4)=0.
\end{array}
\end{equation}
Observe that, by \eqref{hII}, $h$ depends only on $x_2$. Therefore, by \eqref{nablaMII} and \eqref{hII}, we get at once that  
$\nabla^M h=0$, that is, $M$ is parallel.
\end{proof}

\begin{rem}
{\em From \eqref{hII} we can observe that hypersurfaces of type (II) described in Theorem~\ref{FtcfII} are totally geodesic if and only if $D'(r)=H'(r)=0$, that is, $D$ and $H$ are constants, so that $\bar M$ is isometric to the Minkowski space.
}\end{rem}

\begin{prop}\label{FtcfIII}
Let  $F: M \rightarrow \bar M$ denote a hypersurface of type {\em (III)} listed in Theorem~{\em\ref{t1}}. Then there exist local coordinates $(u_1, u_2, u_3)$ on $M$ such that 
the immersion is explicitly given by
\begin{align*}
F(u_1, u_2, u_3) = (u_1+G_1(u_2), G_2(u_2), G_3(u_2), u_3)
\end{align*}
for some functions $G_1$, $G_2$, $G_3$ satisfying $D^2\left(G'_1+G'_2\right)^2= (H-1)^2 \left(1-(G'_3)^2\right)$. In particular, these timelike hypersurfaces are flat.
\end{prop}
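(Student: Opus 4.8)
\textbf{Proof plan for Proposition~\ref{FtcfIII}.} The plan is to exploit the structure of a type~(III) hypersurface, for which $\xi=\cos\theta\, E_2+\sin\theta\, E_3$ and $f_2=0$. Recalling from the proof of Theorem~\ref{t1} that $f_2=0$ forces $f_1$ to be constant, I would first integrate the condition $f_2=-\left(\tfrac{H'}{2D}\right)'=0$ to conclude that $\tfrac{H'}{2D}$ is a constant, and then use the homogeneity relations \eqref{GThom} together with this to pin down $H$ up to an affine normalization; this is what makes the factor $(H-1)$ appear in the final constraint (after translating $t$ so that the relevant constant of integration becomes $1$). The tangent space at each point is spanned by $E_1$, $E_4$ and the vector $W=-\sin\theta\, E_2+\cos\theta\, E_3$ orthogonal to $\xi$ inside $\mathrm{span}\{E_2,E_3\}$.

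Next I would set up the immersion $F=(F_1,F_2,F_3,F_4)$ in coordinates $(u_1,u_2,u_3)$ adapted so that $\partial_{u_1}=E_1=\partial_1$ and $\partial_{u_3}=E_4=\partial_4$, which immediately gives $F_1=u_1+G_1(u_2)$, $F_2=G_2(u_2)$, $F_3=G_3(u_2)$, $F_4=u_3$ after absorbing integration constants and reparametrizing $u_2$. The substantive point is then to translate the orthogonality condition $g(\partial_{u_2}F,\xi)=0$, i.e. that $\partial_{u_2}F$ is a multiple of $W$ (together with an $E_1$, $E_4$ component which is harmless), into a scalar ODE for $G_1,G_2,G_3$. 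Working out $\partial_{u_2}F$ in the coordinate frame $\{\partial_i\}$ and re-expressing $E_3$ via \eqref{frame} as $-\tfrac{H}{D}\partial_1+\tfrac1D\partial_3$, the requirement that this tangent vector be $g$-orthogonal to $\xi$ produces, after using $g$ from \eqref{GTmet} and the normalization of $H$, precisely the stated relation $D^2(G_1'+G_2')^2=(H-1)^2\bigl(1-(G_3')^2\bigr)$; the combination $G_1'+G_2'$ arises because $dt+H\,d\phi$ evaluated on $\partial_{u_2}F$ produces $G_1'+H\cdot(\text{the }\phi\text{-derivative})$ and the relation between the $\phi$-coordinate and $G_2,G_3$ via the $1/D$ factors in $E_3$ mixes things in exactly this way. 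I would verify the signs by checking that the right-hand side is nonnegative, using $|G_3'|\le 1$ which follows from $\xi$ having no $E_4$-component, i.e. from $g(\partial_{u_2}F,E_4)$ being unconstrained while the causal character of the tangent space is fixed.

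Finally, for the flatness claim I would compute the induced connection $\nabla^M$ from the Gauss formula \eqref{fG} using \eqref{nabla}: since $\nabla_{E_1}E_1=\nabla_{E_1}E_4=\nabla_{E_4}E_1=\nabla_{E_4}E_4=0$ and the covariant derivatives involving $E_2,E_3$ reduce, when $f_2=0$ (equivalently $H'/2D$ constant) and projected onto $\mathrm{span}\{E_1,E_4,W\}$, to a connection on $M$ that is metrically flat, one reads off $R^M=0$ directly; alternatively, and more cleanly, one invokes the Gauss equation \eqref{eG} together with the curvature components \eqref{R}, noting that with $f_2=0$ the only surviving curvature terms are the $f_1=\left(\tfrac{H'}{2D}\right)^2$ and $f_3$ ones, and that the tangential components of $R(X,Y)Z$ for $X,Y,Z\in\{E_1,E_4,W\}$ are cancelled by the $h\wedge h$ term once the explicit $h$ (computed from \eqref{nabla}) is substituted. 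I expect the main obstacle to be bookkeeping: correctly tracking the constant coming from integrating $f_2=0$, fixing its value to $1$ by a legitimate coordinate change, and carefully carrying the $-H/D$ and $1/D$ factors through the orthogonality computation so that the $G_1'+G_2'$ combination — rather than $G_1'$ alone — emerges. Everything else is a routine, if somewhat lengthy, computation with \eqref{frame}, \eqref{nabla} and \eqref{R}.
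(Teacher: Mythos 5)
Your skeleton (identify the tangent frame, pass to adapted coordinates, integrate the coordinate expressions of the frame) is the paper's, but two essential steps are missing or would fail as described. First, the existence of the adapted coordinates and the flatness are not separate facts to be argued via the Gauss equation: the paper computes $\nabla_{Y_i}Y_j$ for the frame $Y_1=E_1$, $Y_2=\sin\theta\,E_2-\cos\theta\,E_3$, $Y_3=E_4$ and finds that \emph{every} such derivative is proportional to $\xi$, so $\nabla^M_{Y_i}Y_j=0$; this gives $R^M=0$ and $[Y_i,Y_j]=0$ at once, and the latter is precisely what lets one take the $Y_i$ as coordinate vector fields. You assert coordinates with $\partial_{u_1}=E_1$, $\partial_{u_3}=E_4$ without producing a third commuting field, and you allow $\partial_{u_2}F$ to carry ``harmless'' $E_1$- and $E_4$-components; an $E_4$-component would already destroy $F_4=u_3$, and the normalization of the $Y_2$-component is exactly what encodes the constraint you are after. (Incidentally, the vanishing of the tangential parts of $\nabla_{Y_i}Y_j$ does not use $f_2=0$, so invoking $f_2=0$ for flatness is a red herring.)

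Second, the constraint cannot come from the orthogonality condition $g(\partial_{u_2}F,\xi)=0$: that condition is automatic for any tangent vector and, written out, gives the single relation $G_2'\cos\theta+D\,G_3'\sin\theta=0$, which still involves the unknown $\theta$ and by itself yields no relation among $G_1',G_2',G_3'$ alone. In the paper the relations come from reading $\partial_{u_2}=Y_2$ componentwise via \eqref{frame}, namely $G_1'=\frac{H}{D}\cos\theta$, $G_2'=\sin\theta$, $G_3'=-\frac{1}{D}\cos\theta$, whence $G_1'=-H\,G_3'$ and $(G_2')^2+D^2(G_3')^2=1$. Your explanation of the factor $(H-1)$ --- integrating $f_2=0$ and translating $t$ so that a constant of integration becomes $1$ --- is unfounded: a translation in $t$ does not alter the metric function $H$, which is a datum of the ambient space and is not normalized anywhere in this proposition. (For what it is worth, the relations obtained from the paper's own parametrization do not reproduce the displayed constraint $D^2(G_1'+G_2')^2=(H-1)^2(1-(G_3')^2)$ either, which points to a misprint in the statement; but your proposed route would not recover the correct relations in any case.)
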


\begin{proof}
Since $\xi=\cos \theta E_2+\sin \theta E_3$ for some function $\theta:U\rightarrow \R$, vector fields
\begin{equation}\label{frameIII}
Y_1 = E_1,\quad Y_2 = \sin \theta E_2-\cos \theta E_3, \quad Y_3 = E_4
\end{equation}
span the tangent space to $M$ at each point.  Using \eqref{frameIII} and \eqref{nabla}, a direct calculation gives
\begin{equation}\label{nablaIII}
\begin{array}{lll}
\nabla_{Y_1}Y_1=0, &\quad
\nabla_{Y_2}Y_1=-\frac{H'}{2D}\xi, &\quad
\nabla_{Y_3}Y_1=0, 
 \\[6pt]
\nabla_{Y_1}Y_2=\left( -\frac{H'}{2D}+ Y_1(\theta)\right)\xi, &\quad
\nabla_{Y_2}Y_2=\left( Y_2(\theta)-\frac{D'}{D}\cos \theta \right)\xi, &\quad
\nabla_{Y_3}Y_2=0, 
 \\[6pt]
  \nabla_{Y_1}Y_3=0, &\quad
\nabla_{Y_2}Y_3=0, 
&\quad
\nabla_{Y_3}Y_3=0.
\end{array}
\end{equation}
Since all these vector fields are normal to $M$, using the Gauss formula, we get
\begin{equation}\label{nablaMIII}
\nabla^M_{Y_i}Y_j=0, \quad i,j \in\{1, 2, 3\}.
\end{equation}
Thus, $M$ is flat and vector fields $Y_i = \pa {u_i}$, $i=1,2,3$, are coordinate vector fields on $M$.

Next, from \eqref{frameIII}, \eqref{nablaIII} and the Gauss formula, we get that the second fundamental form is determined by
\begin{equation}\label{hIII}
\begin{array}{lll}
h(Y_1,Y_1)=0, &\quad h(Y_1,Y_3)=0, &\quad h(Y_2,Y_3)=0,
\\[6pt]
h(Y_1,Y_2)=-\frac{H'}{2D}, &\quad h(Y_2,Y_2)=Y_2(\theta)-\frac{D'}{D}\cos \theta , &\quad h(Y_3,Y_3)=0,
\end{array}
\end{equation}
where we used the symmetry of $h$, which is equivalent to $\pa {u_1} \theta=0$ with respect to the coordinates introduced above. Moreover, $f_1$ is a constant, as $f_2=0$.

Denote now by $F: M \rightarrow \bar M, \; (u_1, u_2, u_3)\mapsto(F_1(u_1, u_2, u_3), \ldots, F_4(u_1, u_2, u_3))$ the immersion of the hypersurface in the local coordinates introduced above. By using \eqref{frameIII} and \eqref{frame}, we obtain
\begin{equation} \label{eqpaIII}
\begin{array}{ll}
(\pa {u_1} F_1, \pa {u_1} F_2, \pa {u_1} F_3, \pa {u_1} F_4)=& (1,0,0,0),\\[6pt]
(\pa {u_2} F_1, \pa {u_2} F_2, \pa {u_2} F_3, \pa {u_2} F_4)=& {(\frac{H}{D}\cos \theta,\sin \theta,-\frac{1}{D}\cos \theta,0)},\\[6pt]
(\pa {u_3} F_1, \pa {u_3} F_2, \pa {u_3} F_3, \pa {u_3} F_4)=& (0,0,0,1).
\end{array}
\end{equation}
Observe that, by  \eqref{eqpaIII}, $\partial_{u_3} \theta =0$. In fact, $0=\partial_{u_3} (\partial_{u_2} F_2)
=(\partial_{u_3} \theta) \cos \theta$. Integrating \eqref{eqpaIII}, we find
\begin{equation} \label{solpaIII}
{\begin{array}{ll}
\displaystyle
F_1= u_1+ \int_{c_1}^{u_2} \frac{H}{D} \cos \theta \,ds,\quad
F_2= \int_{c_2}^{u_2}\sin \theta\, ds,\quad
F_3= -\int_{c_3}^{u_2}\frac{\cos \theta}{D}\, ds,\quad
F_4= u_3+ c_4,
\end{array}}
\end{equation}
for some real constants $c_i, i=1,2,3,4$. After a reparametrization, we obtain the immersion given in the statement.
\end{proof}

\begin{prop}\label{FtcfV}
Let $F: M \rightarrow \bar M$ denote a hypersurface of type {\em (V)} listed in Theorem~{\em\ref{t1}}. Then there exist local coordinates $(u_1, u_2, u_3)$ on $M$ such that, up to isometries of the ambient space, the immersion is explicitly given by
\begin{align*}
F(u_1, u_2, u_3) = 
 \left(G_1(u_1)-H u_3, u_2,u_3,G_4(u_1)\right)
 \end{align*}
for some functions $G_1$, $G_4$ satisfying  $(G'_1)^2-(G'_4)^2=-\varepsilon$.
\end{prop}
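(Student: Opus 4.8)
The plan is to mimic the proofs of Theorems~\ref{FtcfII} and Proposition~\ref{FtcfIII}: start from $\xi = aE_1 + dE_4$ with the constraint $f_1 = 0$ coming from case (V) of Theorem~\ref{t1}, produce a tangent frame, compute the induced connection via the Gauss formula, identify coordinate vector fields on $M$, and then integrate the resulting first-order PDE system for the component functions $F_1,\dots,F_4$.

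First I would record that $f_1 = (H'/2D)^2 = 0$ means $H' = 0$, i.e. $H$ is a constant; after a translation in the $t$-coordinate we may as well keep $H$ as a constant (the paper writes $H$ in the immersion, so I will not set it to zero). With $H' = 0$ the connection formulas \eqref{nabla} simplify drastically: every Christoffel symbol involving the factor $H'/2D$ drops out, leaving only $\nabla_{E_2}E_2 = 0$, $\nabla_{E_3}E_2 = \frac{D'}{D}E_3$, $\nabla_{E_3}E_3 = -\frac{D'}{D}E_2$, and all others zero. A natural tangent frame is $Z_1 = dE_1 + aE_4$, $Z_2 = E_2$, $Z_3 = E_3$ (one checks these are orthogonal to $\xi = aE_1+dE_4$ and that $g(Z_1,Z_1) = -(a^2 - d^2)\cdot(\dots)$; here $a^2 - d^2 = \varepsilon$ or $-\varepsilon$ depending on the sign conventions, which is where the constraint $(G_1')^2 - (G_4')^2 = -\varepsilon$ will eventually originate). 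The key computation is then $\nabla_{Z_i}Z_j$: since $E_1$ and $E_4$ are parallel for all the surviving connection terms and $a,d$ are constant along the relevant directions, one finds that the only possibly nonzero components are $\nabla_{Z_3}Z_2$ and $\nabla_{Z_3}Z_3$, and these lie entirely in $\mathrm{span}\{E_2,E_3\} = \mathrm{span}\{Z_2,Z_3\}$, hence are tangent; so $M$ is (locally) a metric product and the $Z_i$ can be chosen as coordinate vector fields $\partial_{u_i}$ — though here, unlike in the earlier propositions, $M$ need not be flat, so I would be careful and instead argue that $\xi$ may be taken to depend only on $u_1$ (because the frame directions $Z_2 = E_2, Z_3 = E_3$ annihilate $a,d$ after using the structure equations), which forces $a = a(u_1)$, $d = d(u_1)$.

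Next I would set up the immersion equations $\partial_{u_i}F = Z_i$ in coordinates, exactly as in \eqref{eqpaII}: using \eqref{frame} one gets $\partial_{u_1}F = (d,0,0,a)$ after accounting for $E_1 = \partial_1$ and $E_4 = \partial_4$, while $\partial_{u_2}F = E_2 = (0,1,0,0)$ and $\partial_{u_3}F = E_3 = (-\tfrac{H}{D}, 0, \tfrac1D, 0)$. Wait — $E_3 = -\frac HD\partial_1 + \frac1D\partial_3$, and with $H$ constant, but $D = D(r) = D(F_2)$; since $\partial_{u_2}F_2 = 1$ we have $F_2 = u_2$ up to a constant, so along $M$ the function $D$ is $D(u_2)$. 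Then integrating: $\partial_{u_3}F_3 = \frac1D$ gives $F_3 = \frac{u_3}{D(u_2)} + \dots$, which is not quite the clean form claimed. This mismatch tells me the correct tangent frame is not $E_3$ itself but rather the combination that makes the coordinates split nicely; more precisely one should take $Z_3$ proportional to $\partial_3$ or reparametrize so that $F(u_1,u_2,u_3) = (G_1(u_1) - Hu_3,\ u_2,\ u_3,\ G_4(u_1))$ emerges — i.e. the role of the third tangent direction is played by $\partial_3$, which equals $DE_3 + \tfrac{H}{1}\cdot(\dots)$; I would recompute the tangent space as $\mathrm{span}\{dE_1 + aE_4,\ E_2,\ \partial_3\}$ and integrate that system, obtaining $F_2 = u_2$, $F_3 = u_3$, $F_1 = G_1(u_1) - Hu_3$ (the $-Hu_3$ arising because $\partial_3$ has a $\partial_1$-component $-H/D$ times $D = $ ... — I will need to track this constant carefully), and $F_4 = G_4(u_1)$ with $(G_1')^2 - (G_4')^2 = g(Z_1,Z_1)/(\text{sign}) = -\varepsilon$ since $Z_1$ is spacelike exactly when $\xi$'s timelike/spacelike character is opposite.

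The main obstacle I anticipate is precisely this bookkeeping of the $\partial_3$ versus $E_3$ direction and the constant $H$: getting the immersion into the stated normal form $(G_1(u_1) - Hu_3, u_2, u_3, G_4(u_1))$ requires choosing the tangent coordinate vector fields as the coordinate fields $\partial_2, \partial_3$ together with the ``boost'' direction $dE_1 + aE_4$, verifying that these genuinely commute and span the tangent space (which uses $H' = 0$ and the structure equation \eqref{bra}, now reading $[E_2,E_3] = -\frac{D'}{D}E_3$), and then integrating; the sign in $(G_1')^2 - (G_4')^2 = -\varepsilon$ follows from $g(dE_1 + aE_4, dE_1 + aE_4) = -d^2 + a^2$ and the fact that $g(\xi,\xi) = \varepsilon = a^2 - d^2$ gives $g(Z_1,Z_1) = -\varepsilon$, which after the reparametrization with $G_1' , G_4'$ the velocities of the $t$- and $z$-components becomes the claimed identity. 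I would also remark, as in the earlier results, whether these hypersurfaces are parallel or merely have a Codazzi form — computing $h$ from \eqref{hII}-analogue, $h(Z_1,Z_1) = 0$, $h(Z_2,Z_2) = h(E_2,E_2) = 0$, $h(Z_3,Z_3) = g(\nabla_{E_3}E_3,\xi) = 0$ since $\nabla_{E_3}E_3 \in \mathrm{span}\{E_2\}\perp\xi$, and similarly all cross terms vanish — so in fact $h \equiv 0$ and these hypersurfaces should turn out to be totally geodesic, which is worth stating explicitly.
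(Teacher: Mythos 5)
Your overall route is the same as the paper's: use $f_1=0$ (so $H'=0$) to simplify \eqref{nabla}, take the tangent frame $\{dE_1+aE_4,\,E_2,\,E_3\}$, rescale the third direction by $D$ so that the three fields commute, and integrate the system for $\partial_{u_i}F$. That part is sound and yields the stated normal form, with $(G_1')^2-(G_4')^2=g(Z_1,Z_1)=d^2-a^2=-\varepsilon$ (your intermediate formula $g(dE_1+aE_4,dE_1+aE_4)=-d^2+a^2$ has the signs of $g(E_1,E_1)=1$ and $g(E_4,E_4)=-1$ swapped, though the conclusion $-\varepsilon$ is right).

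The genuine error is your closing claim that $h\equiv 0$, so that all type (V) hypersurfaces are totally geodesic. This is false, and the source of the mistake is that you treat $a$ and $d$ as constants when computing $\nabla_{Z_1}Z_1$. Only the combination $a^2-d^2=\varepsilon$ is constant; $a$ and $d$ individually vary along $Z_1$, and (with $H'=0$) one gets $\nabla_{Z_1}Z_1=Z_1(d)E_1+Z_1(a)E_4$, so that $h(Z_1,Z_1)=\varepsilon\left(aZ_1(d)-dZ_1(a)\right)$. Writing $a=\frac{e^{\theta}+\varepsilon e^{-\theta}}{2}$ and $d=\frac{e^{\theta}-\varepsilon e^{-\theta}}{2}$ as the paper does, this equals $\varepsilon(a^2-d^2)Y_1(\theta)=Y_1(\theta)$, exactly the nonvanishing entry in \eqref{hV}; the proper (non--totally geodesic) parallel examples of Case (V.i), which appear as case (4) of Theorem~\ref{mainpa}, are precisely of this type. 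The same oversight underlies your earlier assertion that the only nonzero connection components are $\nabla_{Z_3}Z_2$ and $\nabla_{Z_3}Z_3$. Had you pushed that assertion to its logical end you would have concluded $Z_1(a)=Z_1(d)=0$, forcing $G_1$ and $G_4$ to be affine and thereby wrongly restricting the normal form; as written you keep $G_1,G_4$ general, so the immersion you derive is correct, but the added ``totally geodesic'' conclusion must be deleted.
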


\begin{proof}
Since $\xi=a E_1+d E_4$ for some function {$a, d:U\rightarrow \R$} and $||\xi||^2=a^2-d^2= \varepsilon =\pm 1$, there exists some smooth function $\theta: U \mapsto \R$ such that 
$$
\xi=\frac{e^{\theta}+\varepsilon e^{-\theta}}{2} E_1+ \frac{e^{\theta}-\varepsilon e^{-\theta}}{2} E_4.
$$
Then, the following vector fields span the tangent space to $M$ at each point:
\begin{equation}\label{frameV}
Y_1 = \frac{e^{\theta}-\varepsilon e^{-\theta}}{2} E_1+ \frac{e^{\theta}+\varepsilon e^{-\theta}}{2} E_4 ,\qquad Y_2 = E_2, \qquad Y_3 = E_3.
\end{equation}
Using \eqref{frameV}, the condition $f_1=0$ and \eqref{nabla}, a direct calculation gives
\begin{equation}\label{nablaV}
\begin{array}{lll}
\nabla_{Y_1}Y_1=Y_1(\theta)\xi, &\quad
\nabla_{Y_2}Y_1=Y_2(\theta)\xi, &\quad
\nabla_{Y_3}Y_1=Y_3(\theta)\xi, 
 \\[6pt]
\nabla_{Y_1}Y_2=0, &\quad
\nabla_{Y_2}Y_2=0, &\quad
\nabla_{Y_3}Y_2=\frac{D'}{D} Y_3, 
 \\[6pt]
  \nabla_{Y_1}Y_3=0, &\quad
\nabla_{Y_2}Y_3=0, 
&\quad
\nabla_{Y_3}Y_3=- \frac{D'}{D} Y_2,
\end{array}
\end{equation}
From \eqref{nablaV}, using the Gauss formula \eqref{fG}, we get that the Levi-Civita connection on $M$ is completely determined by the following possibly non-vanishing components:
\begin{equation}\label{nablaMV}
\nabla^M_{Y_3}Y_2=\frac{D'}{D} Y_3, \qquad \nabla^M_{Y_3}Y_3=- \frac{D'}{D} Y_2.
\end{equation}
Next, from \eqref{frameV}, \eqref{nablaV} and the Gauss formula, we conclude that the second fundamental form is determined by
\begin{equation}\label{hV}
h(Y_1,Y_1)=Y_1(\theta), \quad h(Y_i,Y_j)=0, \; \text{for all} \; (i,j)\neq(1,1),
\end{equation}
where we took into account the symmetry condition for $h$, which yields $Y_2(\theta)=Y_3(\theta)=0$. Moreover, as $f_2=0$, we deduce that $f_1$ is a constant.

We now look for a system of local coordinates $(u_1,u_2, u_3)$ on $M$, such that
\begin{equation}\label{cooMV}
    \pa {u_1}=Y_1,\qquad 
    \pa {u_2}=Y_2, \qquad
    \pa {u_3}=\alpha Y_2+ \beta Y_3
\end{equation}
for some smooth functions $\alpha, \beta$ on  $M$. Requiring that $[ \pa {u_2},\pa {u_3}]=0$, we get
\begin{align*}
\begin{cases}
    Y_2(\alpha)=0 ,\\[5pt]
   Y_2(\beta)=\frac{D'}{D}\beta .
\end{cases}
\end{align*}
Observe that we only need one solution for $\alpha$ and $\beta$ in the system above in order to find a coordinate system 
$(u_1, u_2, u_3)$ on the surface $M$. So, we take $\alpha=0$ and $\beta=D(r)=D(u_2)$.

With respect to the coordinates introduced above, the symmetry conditions for $h$ read $\pa {u_2} \theta=\pa {u_3} \theta=0$. Therefore,  $ \theta=\theta(u_1)$.

Denote now by $F: M \rightarrow \bar M, \; (u_1, u_2, u_3)\mapsto(F_1(u_1, u_2, u_3), \ldots, F_4(u_1, u_2, u_3))$ the immersion of the hypersurface in the local coordinates introduced above. By using \eqref{frameV} and \eqref{frame}, we obtain
\begin{equation} \label{eqpaV}
\begin{array}{ll}
(\pa {u_1} F_1, \pa {u_1} F_2, \pa {u_1} F_3, \pa {u_1} F_4)=& (\frac{e^{\theta}-\varepsilon e^{-\theta}}{2},0,0,\frac{e^{\theta}+\varepsilon e^{-\theta}}{2}),\\[6pt]
(\pa {u_2} F_1, \pa {u_2} F_2, \pa {u_2} F_3, \pa {u_2} F_4)=& (0,1,0,0),\\[6pt]
(\pa {u_3} F_1, \pa {u_3} F_2, \pa {u_3} F_3, \pa {u_3} F_4)=& {(-H,0,1,0)}.
\end{array}
\end{equation}
Integrating \eqref{eqpaV} we have
$$
\begin{array}{ll}
F_1= \int^{u_1}_{c_1} \frac{e^{\theta}-\varepsilon e^{-\theta}}{2}\, ds-H u_3,\quad &
F_2= u_2+c_2,\\[5pt]
F_3= u_3+c_3,\quad &
F_4= \int^{u_1}_{c_4} \frac{e^{\theta}+\varepsilon e^{-\theta}}{2}\, ds,
\end{array}
$$
for some real constants $c_i, i=1,2,3,4$. After a reparametrization, we obtain the immersion given in the statement.
\end{proof}

\section{Parallel and totally geodesic hypersurfaces}
\setcounter{equation}{0}

We shall now proceed with the classification of parallel and totally geodesic surfaces in the different cases listed in 
Theorem~\ref{t1}. Cases (I) and (II) have already been completely treated in the previous section.  

\bigskip
{
\textbf{Case (III): $\xi=\cos \theta E_2+\sin \theta E_3$ for some function $\theta:U\rightarrow \R$ and $f_2=0$.}

Recall that the description obtained in Proposition~\ref{FtcfIII} applies to these hypersurfaces. 
There, we have constructed Euclidean coordinate vector fields
$$
    \pa {u_1}=Y_1,\qquad    \pa {u_2}=Y_2, \qquad     \pa {u_3}= Y_3
$$
and gave explicit expressions for the second fundamental form in \eqref{hIII}. Starting from \eqref{nablaMIII} and \eqref{hIII}, it is easily seen that the immersion is parallel if and only if
\begin{equation}\label{parIII cond1}
Y_2\left(Y_2(\theta) - \cos \theta \frac{D'}{D}\right)=0,
\end{equation}
since $\theta$ does not depend on {either} $x_1$ or $x_4$. Then, using that $\pa{u_2}=\sin \theta \pa 2- \cos \theta \pa 3$, we can rewrite \eqref{parIII cond1} as follows:
\begin{equation}\label{parIII cond2}
\pa{u_2} (\ln (D \cos \theta))=-\lambda \tan \theta
\end{equation}
for $\lambda \in \R$. Therefore, the parametrization of $M$ given in the Proposition~\ref{FtcfIII}, together with the condition \eqref{parIII cond2}, characterize completely parallel hypersurfaces of $\bar M$ in the case~(III).

In addition we now describe the totally geodesic examples. It follows at once from \eqref{hIII} that $M$ is totally geodesic if and only if
$$
Y_2(\theta) - \cos \theta \frac{D'}{D}=0,
$$
which by integration gives explicitly
\begin{equation}\label{tgcondIII}
\theta= \arccos\left(\frac{\rho}{D}\right),
\end{equation}
where $\rho$ is a real constant. By \eqref{tgcondIII}, equation \eqref{eqpaIII} for the immersion $F: M \rightarrow \bar M, \; (u_1, u_2, u_3)\mapsto(F_1(u_1, u_2, u_3), \ldots, F_4(u_1, u_2, u_3))$  of the hypersurface in the local coordinates introduced above, now reads 
\begin{equation} \label{solpaIIItg}
{\begin{array}{ll}
\displaystyle
F_1= u_1+ \rho\int_{c_1}^{u_2} \frac{  H}{D^2} \,ds, &\quad
F_2= \int_{c_2}^{u_2}\sqrt{1-\left(\frac{\rho}{D}\right)^2}\, ds,\quad \\[10pt]
\displaystyle
F_3= -\rho\int_{c_3}^{u_2}\frac{1}{D^2}\, ds, &\quad
F_4= u_3+ c_4
\end{array}}
\end{equation}
for some real constants $c_i, i=1,2,3,4$.
}

{\begin{ex}
Consider the special case where $\theta$ is constant on $M$. Then, condition \eqref{parIII cond1} becomes
$$
\frac{\pa {u_2} (D)}{D}=-\lambda \tan \theta,
$$
with $\lambda \in \R$, which is satisfied for 
$$
D=\rho e^{-\lambda \tan \theta u_2}
$$ 
{for some} real constant $\rho$. Taking into account $f_2=0$, this example corresponds to a homogeneous metric in the \textit{class I}, with $\alpha=\lambda^2$. Moreover, from $H'=-2\omega D$, with $\omega$ a real constant, {we obtain}
$$
H=\frac{2\omega \rho}{\lambda \cos \theta} e^{-\lambda \tan \theta\, u_2} +k
$$
with $k$ a real constant. Then, {up to translation, the immersion of the hypersurface in the local coordinates introduced above is given by  $F: M \rightarrow \bar M, \; (u_1, u_2, u_3)\mapsto(F_1(u_1, u_2, u_3), \ldots, F_4(u_1, u_2, u_3))$,
with}
$$
\begin{array}{ll}
F_1= u_1+ \frac{2\omega}{\lambda} u_2+ k \frac{\cos^2 \theta}{\lambda \rho \sin \theta} e^{\lambda \tan \theta u_2}, \quad
&F_2= \sin \theta \, u_2,  \\ [5pt]
F_3= -\frac{\cos^2 \theta}{\lambda \rho \sin \theta} e^{\lambda \tan \theta u_2}, \quad
&F_4= u_3.
\end{array}
$$
\end{ex}
}

{\smallskip
\textbf{Case (IV): $\xi=aE_1+cE_3$ with $a,c:U\rightarrow \R$ satisfying $ac(f_1+f_3)-(a^2+c^2)f_2=0$.}

\smallskip
Since $\xi=a E_1+c E_3$ for some function $a, c:U\rightarrow \R$ and $||\xi||^2=a^2-c^2= \varepsilon =\pm 1$, we distinguish two subcases.

\smallskip
\textit{Case (IV.i): $\varepsilon=-1$.}

In this case there exists some smooth function $\theta: U \mapsto \R$ such that 
$$
\xi=\sinh \theta E_1+\cosh \theta E_3.
$$
Then, the following vector fields span the tangent space to $M$ at each point:
\begin{equation}\label{frameIV}
Y_1 = E_2, \qquad Y_2 = E_4, \qquad Y_3 = \cosh \theta E_1+\sinh \theta E_3.
\end{equation}
The condition  $ac(f_1+f_3)-(a^2+c^2)f_2=0$ now becomes
$$
\tanh(2\theta)= \frac{2 f_2}{f_1+f_3},
$$
that means $\theta=\theta(x_2)$ since $f_1$, $f_2$ and $f_3$ {only depend} on $x_2$. 

{\begin{rem}
From the definition of $f_1,f_2,f_3$ we easily see that in this case, $H' \neq D'$. In fact, if $H'=D'$, Then, the above equation yields $\tanh(2\theta)=1$, which {cannot} occur.
\end{rem}
Using $\theta=\theta(x_2)$ in} \eqref{frameIV} and \eqref{nabla}, a direct calculation gives
\begin{equation}\label{nablaIV}
\begin{array}{lll}
\nabla_{Y_1}Y_1=0, &\quad
\nabla_{Y_2}Y_1=0, &\quad
\nabla_{Y_3}Y_1=A(x_2)Y_3+B(x_2)\xi, 
 \\[6pt]
\nabla_{Y_1}Y_2=0, &\quad
\nabla_{Y_2}Y_2=0, &\quad
\nabla_{Y_3}Y_2=0, 
 \\[6pt]
  \nabla_{Y_1}Y_3=(Y_1(\theta)-\frac{H'}{2D})\xi, &\quad
\nabla_{Y_2}Y_3=0, 
&\quad
\nabla_{Y_3}Y_3=A(x_2)Y_1,
\end{array}
\end{equation}
where $$
A(x_2)=\frac{\sinh \theta}{D}\left(H'\cosh \theta-D'\sinh \theta\right), \quad B(x_2)=\frac{D'}{D}\sinh \theta \cosh \theta-\frac{H'}{2D} (\sinh^2 \theta+\cosh^2 \theta).
$$
From \eqref{nablaIV}, using the Gauss formula \eqref{fG}, we get that the Levi-Civita connection on $M$ is completely determined by the following possibly non-vanishing components:
\begin{equation}\label{nablaMIV}
\nabla^M_{Y_3}Y_1=A(x_2)Y_3, \qquad \nabla^M_{Y_3}Y_3=A(x_2) Y_1.
\end{equation}
Next, from \eqref{frameIV}, \eqref{nablaIV} and the Gauss formula, we conclude that the second fundamental form is determined by
\begin{equation}\label{hIV}
h(Y_1,Y_3)=h(Y_3,Y_1)=Y_1(\theta)-\frac{H'}{2D}, \quad h(Y_i,Y_j)=0, \; \text{for all} \; (i,j)\neq(1,3),
\end{equation}
where, by the symmetry condition for $h$, $Y_1(\theta)-\frac{H'}{2D}=B(x_2)$. Moreover, since $\theta$ depends only on $x_2$, we get
$$
\theta'=\frac{\sinh \theta}{D}\left(D'\cosh \theta-H'\sinh \theta\right).
$$
In order to find the cases where $M$ is parallel, we first impose $M$ to be semi-parallel by requiring $R^M\cdot h=0$, which yields:
$$
\left(A'+A^2\right)h(Y_1,Y_3)=0.
$$
Then, we have that  either $A'+A^2=0$ or $h(Y_1,Y_3)=0$. 

\smallskip
\textit{First case: $A'+A^2=0$.}

In this case, by \eqref{nablaMIV} it easily follows that $R^M=0$ and so, $M$ is flat. Moreover, integrating $A'+A^2=0$  we get explicitly
$$ A(x_2)=\frac{1}{x_2}+k,
$$
where $k$ is a real constant.
From \eqref{hIV}, it is now straightforward  that $\nabla^M h = 0$ if and only if $\theta''=(\frac{H'}{2D})'$, whence 
$\theta'=\frac{H'}{2D}+\lambda$ for some real constant  $\lambda$. Thus, since $H' \neq D'$ we have 
$$
\theta=
\frac{1}{2}\ln\frac{2D\lambda\pm \sqrt{4D^2\lambda^2+(D')^2-(H')^2}}{D'-H'}. 
$$
We now look for a system of local coordinates $(u_1,u_2, u_3)$ on $M$, such that
\begin{equation}\label{cooMIV}
    \pa {u_1}=Y_1,\qquad 
    \pa {u_2}=Y_2, \qquad
    \pa {u_3}=\alpha Y_2+ \beta Y_3
\end{equation}
for some smooth functions $\alpha, \beta$ on  $M$. Requiring that $[ \pa {u_2},\pa {u_3}]=0$, we get
\begin{align*}
\begin{cases}
    Y_1(\alpha)=0 ,\\[3pt]
   Y_1(\beta)=A \beta .
\end{cases}
\end{align*}
{A solution for $\alpha$ and $\beta$ in the system above is given by} $\alpha=0$ and $\beta=x_2 e^{k x_2}$.

With respect to the coordinates  $(u_1,u_2,u_3)$ we just introduced, conditions for $\theta$ read $\pa {u_2} \theta=\pa {u_3} \theta=0$. Therefore,  $ \theta=\theta(u_1)$.

Denote now by $F: M \rightarrow \bar M, \; (u_1, u_2, u_3)\mapsto(F_1(u_1, u_2, u_3), \ldots, F_4(u_1, u_2, u_3))$ the immersion of the hypersurface in the local coordinates introduced above. By using \eqref{frameIV} and \eqref{frame}, we obtain
\begin{equation} \label{eqpaIV}
\begin{array}{ll}
(\pa {u_1} F_1, \pa {u_1} F_2, \pa {u_1} F_3, \pa {u_1} F_4)=& (0,1,0,0),\\[6pt]
(\pa {u_2} F_1, \pa {u_2} F_2, \pa {u_2} F_3, \pa {u_2} F_4)=& (0,0,0,1),\\[6pt]
(\pa {u_3} F_1, \pa {u_3} F_2, \pa {u_3} F_3, \pa {u_3} F_4)=&\beta_{|_F} (\cosh \theta- \frac{H}{D}\sinh \theta,0,\frac{1}{D}\sinh \theta,0).
\end{array}
\end{equation}
Integrating \eqref{eqpaIV} we obtain
$$ \label{solpaIV}
\begin{array}{ll}
F_1= \left(\cosh \theta- \frac{H}{D}\sinh \theta\right) (u_1+c_2) e^{k (u_1+c_2)}u_3+c_1,\quad &
F_2= u_1+c_2,\\[5pt]
F_3= \frac{ \sinh \theta}{D} (u_1+c_2) e^{k (u_1+c_2)}u_3+c_3,\quad &
F_4= u_2+c_4,
\end{array}
$$
for some real constants $c_i, i=1,2,3,4$.

\smallskip
\textit{Second case: $h(Y_1,Y_3)=0$.}

In this case, from \eqref{hIV} we deduce that $M$ is totally geodesic. Moreover, as $H' \neq D'$, from $Y_1(\theta)-\frac{H'}{2D}=B(x_2)=0$  we get
$$
\theta=
\frac{1}{4}\ln\left(\frac{D'+H'}{D'-H'} \right). 
$$
%
We then choose a system of local coordinates $(u_1,u_2, u_3)$ on $M$, such that
\begin{equation}\label{cooMIVb}
    \pa {u_1}=Y_1,\qquad 
    \pa {u_2}=Y_2, \qquad
    \pa {u_3}=\alpha Y_2+ \beta Y_3,
\end{equation}
for some smooth functions $\alpha, \beta$ on  $M$. Requiring that $[ \pa {u_2},\pa {u_3}]=0$, we get
\begin{align*}
\begin{cases}
    Y_1(\alpha)=0 ,\\[3pt]
   Y_1(\beta)=A \beta .
\end{cases}
\end{align*}
{We choose as solution for $\alpha$ and $\beta$ in the system above}  $\alpha=0$ and {$\beta=\exp{\int A(x_2) dx_2}$, where we set
$$
A(x_2)=\frac{D'-\sqrt{(D')^2-(H')^2}}{2D}.
$$
}
With respect to the coordinates  $(u_1,u_2,u_3)$  introduced above, the conditions for $\theta$ read $\pa {u_2} \theta=\pa {u_3} \theta=0$. Therefore,  $ \theta=\theta(u_1)$.

Denote now by $F: M \rightarrow \bar M, \; (u_1, u_2, u_3)\mapsto(F_1(u_1, u_2, u_3), \ldots, F_4(u_1, u_2, u_3))$ the immersion of the hypersurface in the local coordinates introduced above. By using \eqref{frameIV} and \eqref{frame}, we obtain
\begin{equation} \label{eqpaIVb}
\begin{array}{ll}
(\pa {u_1} F_1, \pa {u_1} F_2, \pa {u_1} F_3, \pa {u_1} F_4)=& (0,1,0,0),\\[6pt]
(\pa {u_2} F_1, \pa {u_2} F_2, \pa {u_2} F_3, \pa {u_2} F_4)=& (0,0,0,1),\\[6pt]
(\pa {u_3} F_1, \pa {u_3} F_2, \pa {u_3} F_3, \pa {u_3} F_4)=&\beta_{|_F} (\cosh \theta- \frac{H}{D}\sinh \theta,0,\frac{1}{D}\sinh \theta,0).
\end{array}
\end{equation}
Integrating \eqref{eqpaIVb} we obtain
$$
\begin{array}{ll}
F_1= \beta \left(\cosh \theta- \frac{H}{D}\sinh \theta\right)u_3+c_1,\quad &
F_2= u_1+c_2,\\[5pt]
F_3= \beta \frac{\sinh \theta}{D}u_3+c_3,\quad &
F_4= u_2+c_4
\end{array}
$$
for some real constants $c_i, i=1,2,3,4$.
}
\smallskip

\textit{Case (IV.ii): $\varepsilon=1$.}

This case is completely analogous to the case \textit{(IV.ii)}, taking now
$$
\xi=\cosh \theta E_1+\sinh \theta E_3.
$$
Then, in this case, the following vector fields span the tangent space to $M$ at each point:
\begin{equation}\label{frameIVb}
Y_1 = E_2, \qquad Y_2 = E_4, \qquad Y_3 = \sinh \theta E_1+\cosh \theta E_3.
\end{equation}
Again we recover $\theta=\theta(x_2)$. The connection on $M$ and the second fundamental form have analogous description as in \eqref{nablaMIV} and \eqref{hIV}, taking into account the differences expressed in \eqref{frameIVb} and that we now have
$$
A(x_2)=\frac{\cosh \theta}{D}\left(D'\cosh \theta-H'\sinh \theta\right), \quad B(x_2)=\frac{H'}{2D} (\sinh^2 \theta+\cosh^2 \theta)-\frac{D'}{D}\sinh \theta \cosh \theta.
$$
Next, this case the condition of semi-parallelism gives two different subcases: either $Y_1(A)+A^2=0$ and $M$ flat, or $M$ is totally geodesic. The description of parallel and totally geodesic hypersurfaces is obtained by the corresponding ones for the previous case, simply interchanging $\sinh$ with $\cosh$ in the parametrization.

\smallskip
\textbf{Case (V): $\xi=aE_1+dE_4$ for some functions $a,d:U\rightarrow \R$ and $f_1=0$.}
\smallskip

These hypersurfaces have been described in general in Proposition~\ref{FtcfV}.
There, we constructed Euclidean coordinate vector fields
$$
    \pa {u_1}=Y_1,\qquad    \pa {u_2}=Y_2, \qquad     \pa {u_3}=D Y_3
$$
and gave explicit expressions for the second fundamental form in \eqref{hV}. Starting from \eqref{nablaMV} and \eqref{hV}, it is easily seen that the immersion is parallel if and only if $\pa {u_1} \theta$ is constant. As $\theta$ does not depend on $u_2,u_3$, we then have $\theta(u_1) = k_1u_1 + k_2$, for some real constants $k_1,k_2$. We treat separately the cases $k_1\neq 0$ and $k_1=0$.

\newpage
{\textit{Case (V.i): $k_1\neq 0$.}}

In this case, denote by $F: M \rightarrow \bar M: \; (u_1, u_2, u_3)\mapsto(F_1(u_1, u_2, u_3), \ldots, F_4(u_1, u_2, u_3))$ the immersion of the hypersurface in the local coordinates introduced above. By using \eqref{frameV} and \eqref{frame}, we obtain
\begin{equation} \label{eqpaVpar}
\begin{array}{ll}
(\pa {u_1} F_1, \pa {u_1} F_2, \pa {u_1} F_3, \pa {u_1} F_4)=& (\frac{e^{\theta}-\varepsilon e^{-\theta}}{2},0,0,\frac{e^{\theta}+\varepsilon e^{-\theta}}{2}),\\[6pt]
(\pa {u_2} F_1, \pa {u_2} F_2, \pa {u_2} F_3, \pa {u_2} F_4)=& (0,1,0,0),\\[6pt]
(\pa {u_3} F_1, \pa {u_3} F_2, \pa {u_3} F_3, \pa {u_3} F_4)=& {(-H,0,1,0)}.
\end{array}
\end{equation}
Integrating \eqref{eqpaVpar} we find
\begin{equation} \label{solpaVpar}
{\begin{array}{ll}
F_1= \frac{e^{k_1 u_1+k_2}+\varepsilon e^{-k_1 u_1-k_2}}{2k_1}-H u_3+ c_1,\quad &
F_2= u_2+c_2,\\[6pt]
F_3= u_3+c_3,\quad &
F_4= \frac{e^{k_1 u_1+k_2}-\varepsilon e^{-k_1 u_1-k_2}}{2k_1}+ c_4
\end{array}}
\end{equation}
for some real constants $c_1$, $c_2$, $c_3$ and $c_4$. After a reparametrization, we obtain the immersion explicitly given by

{\begin{align*}
F(u_1, u_2, u_3) = 
\begin{cases}
 \left(c \cosh (u_1)+H u_3, u_2,-u_3, c\sinh(u_1)\right) \; \mathrm{for}\, M\, \text{spacelike},\\[6pt]
 \left(c\sinh (u_1)+H u_3, u_2,-u_3, c\cosh(u_1)\right) \; \mathrm{for}\, M\, \text{timelike},
\end{cases}
\end{align*}
for some real constant {$c$}.}

\smallskip
{\textit{Case (V.ii): $k_1= 0$.}

In this case $\theta=k_2$ is a constant. Denote again by $F: M \rightarrow \bar M: \; (u_1, u_2, u_3)\mapsto(F_1(u_1, u_2, u_3), \ldots, F_4(u_1, u_2, u_3))$ the immersion of the hypersurface in the local coordinates introduced above. By using \eqref{frameV} and \eqref{frame}, we obtain
$$
\begin{array}{ll}
(\pa {u_1} F_1, \pa {u_1} F_2, \pa {u_1} F_3, \pa {u_1} F_4)=& (\frac{e^{k_2}-\varepsilon e^{-k_2}}{2},0,0,\frac{e^{k_2}+\varepsilon e^{-k_2}}{2}),\\[6pt]
(\pa {u_2} F_1, \pa {u_2} F_2, \pa {u_2} F_3, \pa {u_2} F_4)=& (0,1,0,0),\\[6pt]
(\pa {u_3} F_1, \pa {u_3} F_2, \pa {u_3} F_3, \pa {u_3} F_4)=& {(-H,0,1,0)},
\end{array}
$$

which, by integration, yield
\begin{equation} \label{solpaVparthk}
{\begin{array}{ll}
F_1= \frac{e^{k_2}-\varepsilon e^{-k_2}}{2}u_1-H u_3+ c_1,\quad &
F_2= u_2+c_2,\\[6pt]
F_3= u_3+c_3,\quad &
F_4= \frac{e^{k_2}+\varepsilon e^{-k_2}}{2}u_1+ c_1,
\end{array}}
\end{equation}
for some real constants $c_i$, $i=1,2,3,4$.

After a reparametrization we obtain
{\begin{align*}
F(u_1, u_2, u_3) = 
\begin{cases}
 \left(\sinh (k_2)u_1-H u_3, u_2,u_3,\cosh(k_2)u_1\right) \quad \mathrm{for}\, M\, \text{spacelike}, \\[6pt]
 \left(\cosh(k_2)u_1-H u_3, u_2,u_3, \sinh (k_2)u_1\right) \quad \mathrm{for}\, M\, \text{timelike}.
\end{cases}
\end{align*}}

\medskip
\textbf{Case (VI): $\xi=aE_1+bE_2+cE_3$ for some functions $a,b,c:U \rightarrow \R$ and $f_2=f_1+f_3=0$.}
\smallskip

We first observe that conditions $f_2=f_1+f_3=0$ imply that
$$
H'=-2\omega D,\quad D''=4 \omega^2 D,
$$
where $\omega$ is a real constant. This corresponds to 
\begin{itemize}
\item  the limiting case within the homogeneous case {\em I)} if $\omega\neq 0$;
\item the Minkowski spacetime if $\omega = 0$.
\end{itemize}
In the limiting case, $\bar M$ decomposes as the product $N^3_1(c)\times \R$ of a Lorentzian three-manifold of constant sectional curvature $c>0$ and a real line \cite{Ca}.

Since $\xi$ is a unit vector field tangent $N^3_1(c)$, applying a suitable isometry, it suffices to consider the cases 
$\xi=E_1$ (timelike) and $\xi=E_2$ (spacelike).

\smallskip
\textit{Case (VI.i): $\xi=E_1$.}

Since $\xi=E_1$, vector fields $E_2,E_3,E_4$ span the tangent space to $M$ at every point.

A direct calculation, using \eqref{frame} and \eqref{nabla}, gives
\begin{equation}\label{nablaVI}
\begin{array}{lll}
\nabla_{E_2}E_2=0, &\;
\nabla_{E_3}E_2=\frac{H'}{2D}\xi+\frac{D'}{D}E_3, &\;
\nabla_{E_4}E_2=0, 
 \\[6pt]
\nabla_{E_2}E_3=-\frac{H'}{2D}\xi, 
&\;
\nabla_{E_3}E_3=-\frac{D'}{D}E_2, 
&\;
\nabla_{E_4}E_3=0,
 \\[6pt]
\nabla_{E_2}E_4=0, 
&\;
\nabla_{E_3}E_4=0, 
&\;
\nabla_{E_4}E_4=0.
\end{array}
\end{equation}
From \eqref{nablaVI} and the Gauss formula, the symmetry of the second fundamental form implies that $H'=0$ and so, 
$h=0$, that is, $M$ is totally geodesic. However, in this case $\bar M$ is isometric to the Minkowski space. Therefore, we shall exclude this case.
\smallskip

\textit{Case (VI.ii): $\xi=E_2$.}

This is a special case of case (II). We already know {from} Proposition~\ref{FtcfII} that $M$ is parallel and flat. Requiring that $M$ is totally geodesic, from \eqref{hII} we deduce again $f_1=f_2=f_3=0$, so that $\bar M$ is isometric to the Minkowski space. 

\medskip
{The above calculations and conclusions are summarized in the following main classification results of totally geodesic and parallel hypersurfaces of  G\"odel-type spacetimes.

\begin{theorem}
Let $F: M \rightarrow \bar M$ be a totally geodesic
hypersurface of a G\"odel-type spacetime. Consider the coordinates
$(x_1, x_2, x_3, x_4)$ on $\bar M$ introduced in Section {\em 2}. Then there exist local coordinates $(u_1, u_2, u_3)$ on $M$, such that up to isometries, the immersion is given by one of the following expressions.

\medskip
$\bullet$ For any value of $H$ and $D$:
\begin{itemize}
\item[(a)] $F(u_1, u_2, u_3) = (u_1, u_2, u_3, 0)$ and $M$ is timelike.
\end{itemize}

\medskip
$\bullet$ If $\frac{H'}{2D}$ is constant:
\begin{itemize}
\item[(b)] $F(u_1, u_2, u_3)= \left( u_1+ \rho\int_{0}^{u_2} \frac{H}{D^2} \,ds, \int_{0}^{u_2}\sqrt{1-\left(\frac{\rho}{D}\right)^2}\, ds, -\rho\int_{0}^{u_2}\frac{1}{D^2}\, ds, u_3 \right),$

\smallskip\noindent
for a real constant $\rho$ and $M$ is timelike.
\end{itemize}

$\bullet$ If $\left|\frac{\left(\frac{H'}{D}\right)'}{\left(\frac{H'}{D}\right)^2- \frac{D''}{D}}\right| <1$ and 
$(D')^2 >(H')^2$:
\begin{itemize}
\item[(c)] {
$F(u_1, u_2, u_3) = 
\begin{cases}
 \left( \left(\tanh \theta- D\right)u_3, u_1, u_3, u_2 \right) \; 
{\text{and}\, M\, \text{is spacelike},}\\[6pt]
 \left( \left(\coth \theta- D\right)u_3, u_1, u_3, u_2 \right) \; 
{\text{and}\, M\, \text{is spacelike}}
\end{cases}$

\medskip\noindent
with $\theta=\dfrac{1}{4}\ln\left(\frac{D'+H'}{D'-H'} \right)$.}
\end{itemize}

$\bullet$ If $H$ is constant:
\begin{itemize}{
\item[(d)] $F(u_1, u_2, u_3) = 
\begin{cases}
 \left(\tanh \theta\, u_1-H u_3, u_2,u_3, u_1\right) \quad \text{and}\, M\, \text{is spacelike}, \\[6pt]
 \left(\coth\theta\, u_1-H u_3, u_2,u_3,u_1\right) \quad \text{and}\, M\, \text{is timelike}
\end{cases}$\\[6pt]
for some real constant $\theta$}.
\end{itemize}
\end{theorem}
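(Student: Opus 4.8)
The proof amounts to organizing, case by case, the analysis already carried out in Sections~3 and~4. The starting observation is that a totally geodesic hypersurface has $h=0$, hence a (trivially) Codazzi second fundamental form, so by Theorem~\ref{t1} every point of $M$ has a neighbourhood on which the unit normal $\xi$ is of one of the six types (I)--(VI). For each type I would then impose $h=0$ on the explicit data already produced in Section~3 (Theorems~\ref{FtcfI}, \ref{FtcfII}, Propositions~\ref{FtcfIII}, \ref{FtcfV}) and in Section~4, and read off the resulting parametrization.

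Types (I), (II) and (VI) are disposed of immediately. For type (I), Theorem~\ref{FtcfI} already gives that the hypersurface is totally geodesic, timelike, and of the form (a). For type (II), which contains (VI.ii) as a subcase, the Remark following Theorem~\ref{FtcfII} shows that $h=0$ forces $H$ and $D$ constant, i.e.\ $\bar M=$ Minkowski, which is excluded; similarly, for (VI.i) the symmetry of $h$ in~\eqref{nablaVI} already forces $H'=0$. So these types contribute no example beyond (a). For type (III), \eqref{hIII} gives $h=0 \iff Y_2(\theta)=\frac{D'}{D}\cos\theta$, which integrates to $\theta=\arccos(\rho/D)$ as in~\eqref{tgcondIII}; the standing condition $f_2=0$ of type (III) means precisely that $\frac{H'}{2D}$ is constant, and substituting~\eqref{tgcondIII} into~\eqref{solpaIIItg} and reparametrizing yields (b). For type (V), \eqref{hV} gives $h=0 \iff Y_1(\theta)=0$, i.e.\ $\theta$ is constant, which is exactly subcase (V.ii); here $f_1=0$ forces $H'=0$, i.e.\ $H$ is constant, and the parametrization~\eqref{solpaVparthk}, after a further reparametrization as in Section~4, becomes (d).

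The delicate case is type (IV). There one first uses the necessary condition of semi-parallelism, which splits into $A'+A^2=0$ or $h(Y_1,Y_3)=0$; imposing $h=0$ selects the second alternative, and $h(Y_1,Y_3)=0$ (equivalently $B(x_2)=0$ with $Y_1(\theta)=\frac{H'}{2D}$) forces $\theta=\frac14\ln\frac{D'+H'}{D'-H'}$, which is real exactly when $(D')^2>(H')^2$, while the type-(IV) relation $\tanh(2\theta)=2f_2/(f_1+f_3)$ requires $\bigl|(H'/D)'/((H'/D)^2-D''/D)\bigr|<1$; feeding this $\theta$ into~\eqref{eqpaIVb} and reparametrizing produces (c), with the subcases $\varepsilon=-1$ and $\varepsilon=1$ accounting for the two displayed variants. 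The main obstacle will be exactly this case: besides the hyperbolic-function bookkeeping and the integration for $\theta$, one must check that the Codazzi constraint and $B(x_2)=0$ are compatible and that both signatures, together with the final reparametrizations, collapse to the single form in (c). Finally, the converse is automatic, since each of (a)--(d) was verified to satisfy $h=0$ in the course of the construction; hence the list is exhaustive and every entry is genuinely totally geodesic.
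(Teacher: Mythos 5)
Your proposal is correct and follows essentially the same route as the paper: the theorem is obtained there precisely by summarizing the case-by-case analysis of Sections~3 and~4, starting from the six normal-vector types of Theorem~\ref{t1} and imposing $h=0$ on the explicit second fundamental forms \eqref{hII}, \eqref{hIII}, \eqref{hIV}, \eqref{hV}, with types (I), (II), (VI) handled exactly as you describe and type (IV) reducing to the alternative $h(Y_1,Y_3)=0$, i.e.\ $B(x_2)=0$. Your identification of the side conditions ($f_2=0$ giving $\frac{H'}{2D}$ constant for (b), $(D')^2>(H')^2$ for the reality of $\theta$ in (c), $f_1=0$ giving $H$ constant for (d)) matches the paper's derivation.
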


\begin{theorem}\label{mainpa}
Let $F: M \rightarrow \bar M$ be a {proper (i.e., not totally geodesic)} parallel hypersurface of a G\"odel-type spacetime. Consider the coordinates $(x_1, x_2, x_3, x_4)$ on $\bar M$ introduced in Section {\em 2}. Then there exist local coordinates $(u_1, u_2, u_3)$ on $M$, such that up to isometries the immersion is given by one of the following expressions:

\smallskip
$\bullet$ For any value of $H$ and $D$:
\begin{itemize}
\vspace{5pt}\item[(1)] {$F(u_1, u_2, u_3) = \left( u_1-\dfrac{H}{D}u_2, c, \dfrac{1}{D}u_2, u_3 \right),$\\[6pt]
where {$c$} is a real constant and $M$ is timelike.}
\end{itemize}

$\bullet$ If $\frac{H'}{2D}$ is constant:
\begin{itemize}
\item[(2)] $\displaystyle F(u_1, u_2, u_3) = \left( u_1+ \int_{0}^{u_2} \frac{H}{D} \cos \theta \,ds,  \int_{0}^{u_2}\sin \theta\, ds, -\int_{0}^{u_2}\frac{\cos \theta}{D}\, ds, u_3 \right),$\\[6pt]
with $\pa{u_2} (\ln (D \cos \theta))=-\lambda \tan \theta$ { for {some real constant $\lambda$} and $M$ is timelike}.
\end{itemize}

$\bullet$ If $\left|\frac{\left(\frac{H'}{D}\right)'}{\left(\frac{H'}{D}\right)^2- \frac{D''}{D}}\right| <1$ and $4D^2\lambda^2+(D')^2 \geq (H')^2$ for some real constant $\lambda$:
\begin{itemize}
\item[(3)]
$F(u_1, u_2, u_3) = 
\begin{cases}
\left( \left(\sinh \theta- \frac{H}{D}\cosh \theta\right) u_1 e^{k u_1}u_3, u_1, \frac{ \cosh \theta}{D} u_1 e^{k u_1}u_3, u_2\right) \; {\text{and}\, M\, \text{is spacelike},}\\[6pt]
\left( \left(\cosh \theta- \frac{H}{D}\sinh \theta\right) u_1 e^{k u_1}u_3, u_1, \frac{ \sinh \theta}{D} u_1 e^{k u_1}u_3, u_2\right) \; {\text{and}\, M\, \text{is timelike},}
\end{cases}$

\bigskip\noindent
where $k$ is a real constant and
$$
\theta=
\frac{1}{2}\ln\frac{2D\lambda\pm \sqrt{4D^2\lambda^2+(D')^2-(H')^2}}{D'-H'}; 
$$
\end{itemize}

\medskip
$\bullet$ If $H$ is constant:
\begin{itemize}

\vspace{5pt}\item[(4)] {$F(u_1, u_2, u_3) = 
\begin{cases}
 \left(\cosh (u_1)-H u_3, u_2,u_3, \sinh(u_1)\right) \; {\text{and}\, M\, \text{is spacelike},}\\[6pt]
 \left(\sinh (u_1)-H u_3, u_2,u_3, \cosh(u_1)\right) \; {\text{and}\, M\, \text{is timelike}}.
\end{cases}$}
\end{itemize}
\end{theorem}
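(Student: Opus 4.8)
The plan is to assemble the two main classification theorems by collecting, case by case, the computations already carried out in Sections~3 and~4. The logical backbone is Theorem~\ref{t1}: any hypersurface with Codazzi second fundamental form — in particular any parallel or totally geodesic one — has, on a neighbourhood of each point, a unit normal $\xi$ of one of the six types (I)--(VI). So the proof proceeds by running through these six types, and within each type imposing the additional constraint of parallelism (respectively $h=0$).

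First I would handle the types that were already resolved: type (I) gives, by Theorem~\ref{FtcfI}, the totally geodesic timelike hypersurface $F=(u_1,u_2,u_3,0)$, which is item (a) of the totally geodesic theorem; type (II) gives, by Theorem~\ref{FtcfII}, the parallel (never totally geodesic, by the Remark after that theorem, unless $\bar M$ is Minkowski) timelike flat hypersurface, which is item (1) of Theorem~\ref{mainpa}. For type (III), $\frac{H'}{2D}$ is constant (since $f_2=0$ forces $f_1$ constant); here I would quote Proposition~\ref{FtcfIII}, impose \eqref{parIII cond1}, obtaining the parallel condition $\partial_{u_2}(\ln(D\cos\theta))=-\lambda\tan\theta$ — this is item (2) — and the stronger totally geodesic condition $Y_2(\theta)=\cos\theta\, D'/D$, integrating to $\theta=\arccos(\rho/D)$ and the explicit immersion \eqref{solpaIIItg}, which is item (b). For type (IV), the analysis in Case~(IV) of Section~4 shows semi-parallelism already splits into $A'+A^2=0$ (flat, parallel) or $h(Y_1,Y_3)=0$ (totally geodesic); translating the integrated expressions for $\theta$ into the ambient coordinates $(x_1,x_2,x_3,x_4)$ and matching signs of $\varepsilon$ gives item (3) in the parallel theorem and item (c) in the totally geodesic one, with the curvature restriction $\left|\left(\frac{H'}{D}\right)'/\left(\left(\frac{H'}{D}\right)^2-\frac{D''}{D}\right)\right|<1$ coming from $\tanh(2\theta)=2f_2/(f_1+f_3)$ being admissible. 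Type (V) requires $f_1=0$, hence $H$ constant; Proposition~\ref{FtcfV} and the subcases $k_1\neq0$, $k_1=0$ yield item (4) (parallel, $k_1\neq0$) and item (d) (totally geodesic, obtained when $\theta$ is such that $h(Y_1,Y_1)=Y_1(\theta)=0$, i.e. the case $k_1=0$). Finally type (VI) forces the limiting homogeneous metric $H'=-2\omega D$, $D''=4\omega^2 D$, and the sub-analysis (VI.i), (VI.ii) shows the totally geodesic condition collapses to $f_1=f_2=f_3=0$, i.e. Minkowski, so type (VI) contributes nothing new beyond what is already covered (or is excluded).

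The main obstacle, and the part that needs genuine care rather than routine bookkeeping, is the type-(IV) computation: one must verify that the semi-parallel splitting $(A'+A^2)\,h(Y_1,Y_3)=0$ is indeed the full content of $R^M\cdot h=0$, then in the flat branch re-derive $\nabla^M h=0\iff\theta''=(\frac{H'}{2D})'$ and integrate twice — tracking the constant $\lambda$ and the resulting closed-form $\theta=\frac12\ln\frac{2D\lambda\pm\sqrt{4D^2\lambda^2+(D')^2-(H')^2}}{D'-H'}$, which forces the inequality $4D^2\lambda^2+(D')^2\geq(H')^2$ appearing in item (3). Here the subtlety is that a purely local parallel condition must be shown to have exactly this family of solutions, and that the hyperbolic-angle substitution $\xi=\sinh\theta\,E_1+\cosh\theta\,E_3$ (resp.\ $\cosh\theta\,E_1+\sinh\theta\,E_3$) is legitimate precisely under the sign condition $(D')^2>(H')^2$ on $\varepsilon$. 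Everything else is a matter of integrating the first-order PDE systems \eqref{eqpaIII}, \eqref{eqpaIV}, \eqref{eqpaV}, \eqref{eqpaVpar} for the component functions $F_i$ and applying a final ambient isometry (translations and the obvious Lorentz boosts in the $(t,z)$-plane) to bring each immersion to the normal form listed. The two theorems then follow by simply declaring that this exhausts all possibilities, each necessary condition on $H,D$ being exactly the one under which the corresponding type can occur.
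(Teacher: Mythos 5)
Your proposal is correct and follows essentially the same route as the paper: it reduces the classification to the six normal types of Theorem~\ref{t1}, reuses Theorems~\ref{FtcfI}--\ref{FtcfII} and Propositions~\ref{FtcfIII}, \ref{FtcfV} for types (I), (II), (III), (V), carries out the semi-parallel splitting $(A'+A^2)h(Y_1,Y_3)=0$ in type (IV), and discards type (VI) as reducing to Minkowski or to type (II), exactly as in Section~4. The identification of the type-(IV) integration (yielding $\theta=\frac{1}{2}\ln\frac{2D\lambda\pm\sqrt{4D^2\lambda^2+(D')^2-(H')^2}}{D'-H'}$ and the admissibility constraint from $\tanh(2\theta)=2f_2/(f_1+f_3)$) as the delicate step matches the paper's treatment.
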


{{\em Minimal hypersurfaces} are the well-known {generalizations} of totally geodesic hypersurfaces, defined by the vanishing of the trace of $h$. More in general,  constant mean curvature (CMC) hypersurface are defined requiring that the trace of the second fundamental form is constant. There is an ever growing interest toward these classes of hypersurfaces. With regard to the class of parallel hypersurfaces of G\"odel-type spacetime we classified, a straightforward calculation leads to the following.}

\begin{cor}
Let $M$ be a parallel hypersurface of a G\"odel-type spacetime, as described in Theorem {\em\ref{mainpa}}. Then, $M$ is a minimal { but not totally geodesic} hypersurface if and only if:

\medskip
\begin{itemize}
\item[(a)]either $M$ corresponds to case {\em (1)} with $D$ constant, or

\smallskip
\item[(b)] $M$ corresponds to case {\em (3)}
\end{itemize}
\end{cor}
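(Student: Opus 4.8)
The plan is to compute the mean curvature (trace of the second fundamental form with respect to the induced metric) in each of the four families (1)--(4) appearing in Theorem~\ref{mainpa}, and then determine when it vanishes. Since every one of these hypersurfaces has already been equipped with an explicit (pseudo-)orthonormal-type frame in Section~3 (the $E_i$ for type (II), the $Y_i$ for types (IV) and (V)) together with explicit formulas for $h$ on that frame, the computation of $\operatorname{tr} h$ reduces to reading off those formulas and contracting with the inverse metric coefficients. In each case the frame is either orthonormal or, as in \eqref{nablaMIV}, an orthogonal frame of unit vectors of definite causal character, so the trace is a signed sum of at most three $h(Y_i,Y_i)$ terms.

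First I would handle case (1), i.e.\ the type (II) hypersurfaces: from \eqref{hII} the only nonzero diagonal component is $h(E_3,E_3)=-D'/D$ (since $h(E_1,E_1)=h(E_4,E_4)=0$), so the mean curvature is $\pm D'/D$ up to sign, which vanishes exactly when $D'=0$, i.e.\ $D$ is constant. This is precisely alternative (a), and it is "not totally geodesic" because by the remark following Theorem~\ref{FtcfII}, total geodesy in type (II) forces also $H'=0$, i.e.\ the Minkowski case, which is excluded; so $D$ constant with $H$ non-constant gives genuinely minimal, non-totally-geodesic examples. Next, for case (2) (type (III), parallel), \eqref{hIII} gives diagonal components $h(Y_1,Y_1)=h(Y_3,Y_3)=0$ and $h(Y_2,Y_2)=Y_2(\theta)-\cos\theta\,D'/D$; using the parallelism condition \eqref{parIII cond2} one rewrites this and checks it cannot vanish identically without collapsing to the totally geodesic subcase \eqref{tgcondIII} — so case (2) yields no new minimal non-totally-geodesic hypersurface and is (correctly) absent from the corollary.

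Then I would treat case (4) (type (V), $k_1\ne 0$): from \eqref{hV} the only nonzero component is $h(Y_1,Y_1)=Y_1(\theta)=\theta'(u_1)=k_1\ne 0$ while $h(Y_2,Y_2)=h(Y_3,Y_3)=0$, so the mean curvature equals $\pm k_1\ne 0$ and these are never minimal — consistent with case (4) being excluded from the corollary. Finally, for case (3) (type (IV), the "first case" $A'+A^2=0$, which is the proper parallel family), the second fundamental form from \eqref{hIV} has \emph{all diagonal components equal to zero}: the only nonzero components are the off-diagonal $h(Y_1,Y_3)=h(Y_3,Y_1)=B(x_2)$. Since the frame $\{Y_1,Y_2,Y_3\}$ is orthogonal, the trace is the sum of the diagonal terms, which is identically $0$; hence every type (IV) proper parallel hypersurface is automatically minimal, giving alternative (b). Assembling these four verifications proves the corollary.

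The only delicate point — and the one I would be most careful about — is the bookkeeping of which contraction convention computes the mean curvature in the pseudo-Riemannian orthogonal (not orthonormal) frames: one must weight each $h(Y_i,Y_i)$ by $g(Y_i,Y_i)^{-1}\in\{\pm1\}$, and in case (3) confirm that the $Y_i$ in \eqref{frameIV}/\eqref{frameIVb} indeed have unit (definite) length so that $\operatorname{tr} h = \sum_i \varepsilon_i\,h(Y_i,Y_i)$ with $\varepsilon_i=\pm1$; since the diagonal of $h$ vanishes this is moot for case (3), but it matters in cases (1) and (4) for correctly concluding non-minimality when $D'\ne 0$ resp.\ $k_1\ne0$. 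Once the signs are pinned down, the rest is the routine arithmetic indicated above, and the equivalence "(a) or (b)" follows by exhausting the list in Theorem~\ref{mainpa}.
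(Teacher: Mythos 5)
Your proposal is correct and is essentially the paper's own argument: the authors dispose of this corollary with the single phrase ``a straightforward calculation'', and the calculation they intend is precisely the signed trace of the second fundamental forms \eqref{hII}, \eqref{hIII}, \eqref{hIV}, \eqref{hV} that you carry out, yielding $\mathrm{tr}\, h = D'/D$ in case (1), a nonzero constant in cases (2) and (4) for proper parallel immersions, and identically $0$ in case (3) because only the off-diagonal component $h(Y_1,Y_3)$ survives there. Your care with the causal characters $g(Y_i,Y_i)=\pm 1$ in the contraction is the only nontrivial bookkeeping, and you handle it correctly.
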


\begin{rem}
The above case {\em (b)} extends to the whole class of Codazzi hypersurfaces of type (IV) in Theorem {\em\ref{t1}}{, that is, all such hypersurfaces are minimal}.
\end{rem}

\begin{cor}
Let $M$ be a parallel hypersurface of a G\"odel-type spacetime, as described in Theorem {\em\ref{mainpa}}. Then, $M$ is a hypersurface of constant mean curvature (CMC{$\neq0$}) if and only if one of the following occurs:

\medskip
\begin{itemize}
\item[(a)] $M$ corresponds to case {\em (1)} with $D'$ constant;

\smallskip
\item[(b)] $M$ corresponds to case {\em (2)};

\smallskip
\item[(c)] $M$ corresponds to case {\em (4)}.
\end{itemize}
\end{cor}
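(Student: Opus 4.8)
The plan is to use the general fact that parallelism of the second fundamental form already forces the mean curvature to be constant: if $\nabla^{M}h=0$ then, since $\nabla^{M}g=0$, the function $\operatorname{tr}_g h$ satisfies $X(\operatorname{tr}_g h)=\operatorname{tr}_g\big(\nabla^{M}_{X}h\big)=0$ for every tangent vector field $X$, so $\operatorname{tr}_g h$ is constant on each connected component of $M$. Hence every hypersurface appearing in Theorem~\ref{mainpa} is automatically of constant mean curvature, and the Corollary reduces to deciding in which of the four families this constant is nonzero — equivalently, in which families $M$ fails to be minimal. Since the preceding Corollary already lists the minimal, non--totally geodesic examples (and the totally geodesic ones are classified separately), the statement will follow once I compute $\operatorname{tr}_g h$ explicitly in each case and match the non-minimal subcases to (a), (b) and (c).

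Concretely I would run through the four families using the data collected in Section~3. For case (1) (type (II)): in the flat induced metric with pseudo-orthonormal frame $\{E_1,E_3,E_4\}$, forming $S=g^{-1}h$ from \eqref{hII} gives $\operatorname{tr}S$ proportional to $D'/D$; as such a hypersurface lies in a slice $r=\text{const}$, this is constant, and it is nonzero exactly when $D'$ does not vanish along $M$ — the complementary case producing the minimal non--totally geodesic example of the preceding Corollary (or, if $H'$ vanishes there as well, a totally geodesic one). For case (2) (type (III)): with the frame \eqref{frameIII}, the off-diagonal entry $h(Y_1,Y_2)=-\tfrac{H'}{2D}$ contributes nothing to the trace, so $\operatorname{tr}S=-\big(Y_2(\theta)-\tfrac{D'}{D}\cos\theta\big)$, which is constant by the parallelism condition \eqref{parIII cond1}; hence these hypersurfaces are CMC, and CMC${}\neq0$ unless totally geodesic. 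For case (3) (type (IV)): from \eqref{hIV} the only nonzero component of $h$ is off-diagonal, so $\operatorname{tr}S\equiv0$ and $M$ is minimal — this is why case (3) is absent from the list (cf.\ the Remark after the preceding Corollary). For case (4) (type (V)): from \eqref{hV} the only nonzero component is $h(Y_1,Y_1)=\partial_{u_1}\theta$, which parallelism forces to be a constant $k_1$, and family (4) is exactly the subcase $k_1\neq0$ (after reparametrization), so $\operatorname{tr}S$ equals a nonzero constant. Collecting these computations with the classification of minimal hypersurfaces yields both implications.

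The computations $S=g^{-1}h$ and their traces are routine. The point that requires care — and the only real obstacle — is that in cases (1) and (2) the second fundamental form is in general not diagonalizable (a genuinely Lorentzian phenomenon) and can have vanishing trace while $h\neq0$; one must therefore not read off the mean curvature as a mere sum of diagonal entries of $h$, and one must argue, using the ambient constraints recorded in the proof of Theorem~\ref{t1} (that $f_1$ is constant once $f_2=0$, and that $f_1=f_2=f_3=0$ forces the excluded Minkowski metric) together with $H'=-2\omega D$, that the instances with $\operatorname{tr}S=0$ but $h\neq0$ are precisely the minimal--non--totally geodesic ones already classified, so that they do not produce spurious CMC${}\neq0$ hypersurfaces. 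Once this bookkeeping is settled, matching the surviving subcases to the three alternatives in the statement is immediate.
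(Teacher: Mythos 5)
Your proposal is correct and is essentially the paper's own (unwritten) argument: the paper offers no proof beyond the words ``a straightforward calculation'', and that calculation is exactly what you carry out --- parallelism makes $\operatorname{tr}_g h$ constant, so the whole issue reduces to evaluating the trace of $h$ from \eqref{hII}, \eqref{hIII}, \eqref{hIV} and \eqref{hV} and singling out the families where that constant is nonzero, with case (3) dropping out because its only nonzero component is off-diagonal. One remark: your (correct) computation in case (1) yields the condition that $D'$ does not vanish on the slice $r=\mathrm{const}$, which does not literally match the wording ``$D'$ constant'' in item (a) of the Corollary; that discrepancy lies in the paper's phrasing (compare the complementary ``$D$ constant'' in the minimality Corollary), not in your argument.
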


}


\begin{thebibliography}{9999}


\bibitem{BCF}
R. Bartolo,  A.M. Candela and  J.L. Flores, \emph{A note on geodesic connectedness of G\"odel type spacetimes}, Diff. Geom. Appl., {\bf 29}  
(2011),  779--786.

\bibitem{CRT}
M.O. Calvao, M.J. Reboucas and  A.F.F. Teixeira, \emph{Notes on a class of homogeneous space-times}, J. Math. Phys., {\bf 29} (1988), 1127-1129.

\bibitem{Ca}
G. Calvaruso, \emph{The Ricci soliton equation and the structure of G\"odel-type spacetimes}, J. Math. Analysis and Applications, {\bf 465} (2018), 1112-1133.

\bibitem{CV1}
G. Calvaruso and J. Van der Veken, \emph{Parallel surfaces in three-dimensional Lorentzian Lie groups}, Taiwanese J. Math.,
\textbf{14} (2010), 223--250. 

\bibitem{CV2}
G. Calvaruso and J. Van der Veken, \emph{Lorentzian symmetric three-spaces and the classification of their parallel surfaces}, Internat. J. Math., \textbf{20} (2009), 1185--1205.

\bibitem{CV3}
G. Calvaruso and J. Van der Veken, \emph{Parallel surfaces in Lorentzian three-manifolds admitting a parallel null vector 
field}, J. Phys. A, \textbf{43} (2010), 325207, 9 pp.

\bibitem{CV4}
G. Calvaruso and J. Van der Veken, \emph{Parallel surfaces in three-dimensional reducible spaces}, Proc. Roy. Soc. Edinburgh Sect. A, \textbf{143} (2013), 483--491.

\bibitem{CV5}
G. Calvaruso and J. Van der Veken, \emph{Totally geodesic and parallel hypersurfaces of four-dimensional oscillator groups},  Results Math., \textbf{64} (2013), 135--153. 

\bibitem{CSV}
G. Calvaruso, R. Storm and J. Van der Veken, \emph{Parallel and totally geodesic hypersurfaces of non-reductive homogeneous four-manifolds}, Math. Nachr., \textbf{293} (2020), 1707--1729. 

\bibitem{CS}
U. Camci and M. Sharif, \emph{Matter collineations of spacetime homogeneous G\"odel-type metrics}, Class. Quantum Grav., {\bf 20} (2003), 2169--2179. 

\bibitem{ChV}
B.-Y. Chen and J. Van der Veken, \emph{Complete classification of parallel surfaces in $4$-dimensional Lorentzian space forms},  Tohoku Math. J., \textbf{61} (2009), 1--40. 


\bibitem{Dau}
G. Dautcourt, \emph{The lightcone of G\"odel-like spacetimes}, Class. Quantum Grav., {\bf 27} (2010) 225024 (17pp).

\bibitem{DV}
B. De Leo and J. Van der Veken, \emph{Totally geodesic hypersurfaces of four-dimensional generalized symmetric spaces}, Geom. Dedicata, \textbf{159} (2012), 373--387. 


\bibitem{GGKS}
R.J. Gleiser, M. G\"urses, A. Karasu and  O. Sarıoglu, \emph{Closed timelike curves and geodesics of G\"odel-type
metrics}, Class. Quantum Grav., {\bf  23} (2006), 2653--2663.

\bibitem{God}
K. G\"odel, \emph{An example of a new type of cosmological solution of Einstein's field equations of gravitation},  Rev. Mod. Phys., {\bf 21} (1949), 447--450.


\bibitem{Ja}
S. Jamal, \emph{Potentials and point symmetries of Klein-Gordon equations in space-time homogenous G\"odel-type metrics}, Int. J. Geom. Methods Mod. Phys., {\bf 14}  (2017),  1750070, (13pp).


\bibitem{MNPV}
A. Melfo, L. Nunez, U. Percoco and V.M. Villalba, \emph{Collineations of G\"odel-type space-times}, J. Math. Phys., {\bf 33}  (1992),  2258--2261.

\bibitem{N}
H. Naitoh, \emph{Symmetric submanifolds of compact symmetric
spaces}, Tsukuba J. Math. \textbf{10} (1986), 215--242.

\bibitem{PS}
M. Plaue and M. Scherfner, \emph{On some structure results for G\"odel-type spacetimes}  J. Geom. Phys.,  {\bf 110}  (2016), 402--406.

\bibitem{RG}
A-K. Raychaudhuri and S.N. Guha Thakurta, \emph{Homogeneous space-times of the G\"odel type}, Phys. Rev.
D, {\bf 22} (1980), 802, 6pp.

\bibitem{RA}
M.J. Reboucas and  J.E. Aman, \emph{Computer-aided study of a class of Riemannian space-times}, J. Math. Phys., {\bf 28} (1987), 888-892.

\bibitem{RT}
M.J. Reboucas and J. Tiomno, \emph{Homogeneity of Riemannian space-times of G\"odel type}, Phys. Rev. D, {\bf 28} (1983), 1251--64.


\bibitem{Sh}
M. Sharif, \emph{Energy and momentum in spacetime homogeneous
G\"odel-type metrics}, Int. J. Mod. Physics D, {\bf 13} (2004) 1019--1028.


\end{thebibliography}
\end{document}